\newtheorem{theorem}{Theorem}
\newtheorem*{theorem*}{Theorem}
\newtheorem{proposition}[theorem]{Proposition}
\newtheorem{corollary}[theorem]{Corollary}
\theoremstyle{definition}
\newtheorem{definition}[theorem]{Definition}
\newtheorem{remark}[theorem]{Remark}
\newtheorem*{question*}{Question}
\newcommand{\sll}[1]{\mkern-4mu\mathbin{/\mkern-5mu/}_{\mkern-4mu{#1}}}
\newcommand{\g}{\mathfrak{g}}
\renewcommand{\exp}{\mathrm{exp}}
\numberwithin{equation}{section}
\title[Gelfand--Cetlin abelianizations of symplectic quotients]{Gelfand--Cetlin abelianizations of symplectic quotients}
\author[Peter Crooks]{Peter Crooks}
\author[Jonathan Weitsman]{Jonathan Weitsman}
\address[Peter Crooks]{Department of Mathematics and Statistics\\ Utah State University \\ 3900 Old Main Hill \\ Logan, UT 84322, USA}
\email{peter.crooks@usu.edu}
\address[Jonathan Weitsman]{Department of Mathematics \\ Northeastern University \\ 360 Huntington Avenue \\ Boston, MA 02115, USA}
\email{j.weitsman@northeastern.edu}
\subjclass{53D20 (primary); 17B80 (secondary)}
\keywords{symplectic quotient, Gelfand--Cetlin system, stratified symplectic space}
\begin{document}

\begin{abstract}           

We show that generic symplectic quotients of a Hamiltonian $G$-space $M$ by the action of a compact connected Lie group $G$ are also symplectic quotients of the same manifold $M$ by a compact torus. The torus action in question arises from certain integrable systems on $\g^*$, the dual of the Lie algebra of $G$. Examples of such integrable systems include the Gelfand--Cetlin systems of Guillemin--Sternberg in the case of unitary and special orthogonal groups, and certain integrable systems constructed for all compact connected Lie groups by Hoffman--Lane. Our abelianization result holds for smooth quotients, and more generally for quotients which are stratified symplectic spaces in the sense of Sjamaar--Lerman.

\end{abstract}

\maketitle
\begin{scriptsize}
\end{scriptsize}

\section{Introduction}

Let $G$ be a compact connected Lie group with Lie algebra $\g$. Suppose that $M$ is a Hamiltonian $G$-space, i.e. a symplectic manifold equipped with a symplectic action of $G$ and equivariant moment map $\mu : M \longrightarrow \g^*.$   The symplectic or Marsden--Weinstein \cite{MarsdenWeinstein} quotient of $M$ by $G$ at level $\xi\in\g^*$ is the topological space $$M\sll{\xi}G\coloneqq\mu^{-1}(\xi)/G_{\xi},$$ where $G_{\xi}\subset G$ is the $G$-stabilizer of $\xi.$  If $G$ acts freely on $\mu^{-1}(\xi),$ then $M\sll{\xi}G$ is a smooth symplectic manifold. In the absence of this freeness assumption, $M\sll{\xi}G$ is a stratified symplectic space in the sense of Sjamaar-Lerman \cite{SjamaarLerman}.

The purpose of this paper is to show that certain integrable systems on $\g^*$ allow us to express generic symplectic quotients of a Hamiltonian $G$-space $M$ as symplectic quotients of the same manifold 
$M$ by the action of a compact torus. Such integrable systems include the Gelfand--Cetlin systems constructed by Guillemin--Sternberg \cite{GuilleminSternbergGC,GuilleminSternbergThimm} 
for unitary and special orthogonal groups, as well Hoffman--Lane's more recent generalizations of Gelfand--Cetlin systems \cite{Lane} to arbitrary Lie type.

An example of our main result arises in classical mechanics \cite{GuilleminSternbergCollective}. Suppose that we are given a Hamiltonian $\operatorname{SO}(3)$-space $M$ with an invariant Hamiltonian function $H:M\longrightarrow\mathbb{R}$. The $\operatorname{SO}(3)$-action gives rise to two Poisson-commuting conserved quantities: the total angular momentum, and the angular momentum in some fixed direction in the Lie algebra of $\operatorname{SO}(3)$ corresponding to a choice of maximal torus. These quantities give the components of a moment map for a densely defined 2-torus action on $M,$ coming from the Gelfand--Cetlin system of Guillemin--Sternberg for the case of $\operatorname{SO}(3)$.\footnote{The square of the total angular momentum is a smooth function, but the orbits of its Hamiltonian flow do not have constant period, and so it does not generate a circle action.  Taking the square root gives a function whose Hamiltonian flow generates a circle action, but which is only continuous, not differentiable, at zero. It therefore does not define a Hamiltonian flow at zero.}  Our main result shows that for a non-zero value $\xi$ of the angular momentum, the symplectic quotient $M\sll{\xi}\operatorname{SO}(3)$ coincides with an appropriate symplectic quotient of $M$ under the densely defined torus action.  See \cite{GuilleminSternbergCollective} for more examples of these techniques.

\subsection{Main result}   We introduce the notion of a Gelfand--Cetlin datum $(\lambda_{\text{big}},\g^*_{\text{s-reg}})$ in Definition \ref{Definition: Main definition}. This amounts to $\lambda_{\text{big}}$ being a continuous map on $\g^*$ that restricts to a Poisson moment map for a Hamiltonian action of a compact torus $\mathbb{T}_{\text{big}}$ on an open dense subset $\g^*_{\text{s-reg}}\subset\g^*$, along with some extra conditions that capture salient properties of the classical Gelfand--Cetlin systems. One of these conditions is that the open, symplectic submanifold $M_{\text{s-reg}}\coloneqq\mu^{-1}(\g^*_{\text{s-reg}})\subset M$ be a Hamiltonian $\mathbb{T}_{\text{big}}$-space with moment map
$\lambda_M\coloneqq(\lambda_{\text{big}}\circ\mu)\big\vert_{M_{\text{s-reg}}}$ for any Hamiltonian $G$-space $M$ with moment map $\mu:M\longrightarrow\g^*$.
The results of Guillemin--Sternberg \cite{GuilleminSternbergGC,GuilleminSternbergThimm} imply that Gelfand--Cetlin data exist for all unitary and special orthogonal groups, while more recent results of Hoffman--Lane \cite{Lane} imply that such data exist in all Lie types.

The following is the main result of our paper.

\begin{theorem*}\label{Theorem: Main theorem}
Let $G$ be a compact connected Lie group, and $M$ a Hamiltonian $G$-space with moment map $\mu:M\longrightarrow\g^*$. Suppose that $(\lambda_{\emph{big}},\g^*_{\emph{s-reg}})$ is a Gelfand--Cetlin datum, and consider a point $\xi\in\g^*_{\emph{s-reg}}$.
\begin{itemize}
\item[\textup{(i)}] The torus $\mathbb{T}_{\emph{big}}$ acts freely on $\lambda_M^{-1}(\lambda_{\emph{big}}(\xi))$ if and only if $G_{\xi}$ acts freely on $\mu^{-1}(\xi)$. In this case, there is a canonical symplectomorphism $M\sll{\xi}G\cong M_{\emph{s-reg}}\sll{\lambda_{\emph{big}}(\xi)}\mathbb{T}_{\emph{big}}$.
\item[\textup{(ii)}] There is a canonical isomorphism $M\sll{\xi}G\cong M_{\emph{s-reg}}\sll{\lambda_{\emph{big}}(\xi)}\mathbb{T}_{\emph{big}}$ of stratified symplectic spaces. 
\end{itemize}
\end{theorem*}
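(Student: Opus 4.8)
The plan is to realise both $M\sll{\xi}G$ and $M_{\text{s-reg}}\sll{c}\mathbb{T}_{\text{big}}$, where $c\coloneqq\lambda_{\text{big}}(\xi)$, as the \emph{same} quotient of the single level set $\mu^{-1}(\xi)$, and then to transport the symplectic (resp. stratified symplectic) structure across by means of the characterising property $\pi^*\omega_{\text{red}}=\iota^*\omega$ of a reduced form. Set $\O_\xi\coloneqq G\cdot\xi$. The Casimir-type components of $\lambda_{\text{big}}$ are $G$-invariant and separate regular coadjoint orbits, so the fibre $\Phi\coloneqq\lambda_{\text{big}}^{-1}(c)\cap\g^*_{\text{s-reg}}$ lies inside $\O_\xi$; by the axioms of a Gelfand--Cetlin datum it is a single $\mathbb{T}_{\text{big}}$-orbit, hence $\Phi=\mathbb{T}_{\text{big}}\cdot\xi$, and a Lagrangian submanifold of $\O_\xi$. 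Since $\lambda_M=\lambda_{\text{big}}\circ\mu$ on $M_{\text{s-reg}}$ we get $\lambda_M^{-1}(c)=\mu^{-1}(\Phi)$, so in particular $\mu^{-1}(\xi)\subseteq\lambda_M^{-1}(c)\subseteq M_{\text{s-reg}}$. Because a moment map is a Poisson map, $\mu$ intertwines the Hamiltonian flow of a collective function $f\circ\mu$ with that of $f$; applied to the components of $\lambda_{\text{big}}$ this shows that $\mu\colon M_{\text{s-reg}}\to\g^*_{\text{s-reg}}$ is $\mathbb{T}_{\text{big}}$-equivariant and that at every point the infinitesimal $\mathbb{T}_{\text{big}}$-action is realised by an element of $\g$, so $\mathbb{T}_{\text{big}}$-orbits in $M_{\text{s-reg}}$ sit inside $G$-orbits. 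Equivariance then gives $\mu^{-1}(\Phi)=\mathbb{T}_{\text{big}}\cdot\mu^{-1}(\xi)$ and $\mathbb{T}_{\text{big}}\cdot m\cap\mu^{-1}(\xi)=\Stab_{\mathbb{T}_{\text{big}}}(\xi)\cdot m$ for $m\in\mu^{-1}(\xi)$.

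Using this, the composite $\mu^{-1}(\xi)\hookrightarrow\lambda_M^{-1}(c)\to\lambda_M^{-1}(c)/\mathbb{T}_{\text{big}}$ is a surjection whose fibres are the $\Stab_{\mathbb{T}_{\text{big}}}(\xi)$-orbits, so it descends to a continuous bijection $\mu^{-1}(\xi)/\Stab_{\mathbb{T}_{\text{big}}}(\xi)\to M_{\text{s-reg}}\sll{c}\mathbb{T}_{\text{big}}$, a homeomorphism since both sides are quotients of locally compact spaces by compact groups. Next, the restricted action of $\mathbb{T}_{\text{big}}$ on $\g^*_{\text{s-reg}}$ has an $(\rk\g)$-dimensional subtorus $K$ acting trivially — the ``Casimir directions'' — and since $\Phi$ is a Lagrangian $\mathbb{T}_{\text{big}}$-orbit and the effective quotient action on $\g^*_{\text{s-reg}}$ is free (this being part of what $\g^*_{\text{s-reg}}$ means), one gets $\dim\mathbb{T}_{\text{big}}=\tfrac12(\dim\g+\rk\g)$ and $\Stab_{\mathbb{T}_{\text{big}}}(\xi)=K$. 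On $M_{\text{s-reg}}$, however, $K$ acts through the Casimir components of $\lambda_M$, whose Hamiltonian vector fields at a point $m$ are given by elements of the isotropy algebra $\g_{\mu(m)}$; along $\mu^{-1}(\xi)$ these span $\Lie(G_\xi)$, so the $K$-action and the $G_\xi$-action on $\mu^{-1}(\xi)$ have the same orbits and — invoking the calibration of the Casimir circle actions built into a Gelfand--Cetlin datum — the same point stabilisers. Hence $\mu^{-1}(\xi)/\Stab_{\mathbb{T}_{\text{big}}}(\xi)=\mu^{-1}(\xi)/G_\xi=M\sll{\xi}G$ as topological spaces, with the same orbit projection.

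For (ii), recall that $M\sll{\xi}G$ and $M_{\text{s-reg}}\sll{c}\mathbb{T}_{\text{big}}$ carry Sjamaar--Lerman stratified symplectic structures in which, on each orbit-type stratum, the reduced form $\omega_{\text{red}}$ is determined by $\pi^*\omega_{\text{red}}=\iota^*\omega$, with $\iota$ the inclusion of the stratum of the level set into $M$ and $\pi$ the orbit projection. Since $\mu^{-1}(\xi)\hookrightarrow\lambda_M^{-1}(c)\hookrightarrow M$ composes to the inclusion $\iota_\xi\colon\mu^{-1}(\xi)\hookrightarrow M$, pulling back the $\mathbb{T}_{\text{big}}$-reduced form to $\mu^{-1}(\xi)$ gives $\iota_\xi^*\omega$, which is exactly what the $G$-reduced form on $M\sll{\xi}G$ pulls back to; as the two quotient maps out of $\mu^{-1}(\xi)$ coincide and (by the stabiliser matching above) induce the same stratification, the reduced forms agree stratum by stratum, giving the canonical isomorphism of stratified symplectic spaces. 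For (i): for $m\in\mu^{-1}(\xi)$ one has $\Stab_{\mathbb{T}_{\text{big}}}(m)\subseteq K$, equal to the $K$-stabiliser, which matches $\Stab_G(m)=\Stab_{G_\xi}(m)$; hence $\mathbb{T}_{\text{big}}$ acts freely on $\lambda_M^{-1}(c)=\mathbb{T}_{\text{big}}\cdot\mu^{-1}(\xi)$ if and only if $G_\xi$ acts freely on $\mu^{-1}(\xi)$, and in that case the isomorphism of (ii) is a diffeomorphism of smooth manifolds (the composite $\mu^{-1}(\xi)\to M_{\text{s-reg}}\sll{c}\mathbb{T}_{\text{big}}$ being a submersion), hence a symplectomorphism.

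The main obstacle is the passage from ``$K$ and $G_\xi$ have the same orbits on $\mu^{-1}(\xi)$'' to ``they have the same point stabilisers''. On Lie algebras the two actions agree under a fixed isomorphism $\Lie(K)\xrightarrow{\ \sim\ }\Lie(G_\xi)$ coming from the differentials of the Casimir invariants, but matching this isomorphism on integral lattices — equivalently, matching the periods of the Casimir circle actions with the cocharacter lattice of a maximal torus of $G$ — is precisely the information that must be extracted from the finer axioms of a Gelfand--Cetlin datum (and is transparent classically, where the radial Hamiltonians are normalised to have $2\pi$-periodic flow). Verifying that these axioms supply the lattice matching, together with the attendant bookkeeping of isotropy types used in the stratumwise comparison, is where the real content lies; everything else is formal manipulation of reduced spaces and of the collective-Hamiltonian identity.
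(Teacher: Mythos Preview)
Your proposal is correct and follows essentially the same route as the paper: the paper's splitting $\mathbb{T}_{\text{big}}=\mathbb{T}_{\text{small}}\times\mathbb{T}_{\text{int}}$ plays the role of your stabilizer $K=\Stab_{\mathbb{T}_{\text{big}}}(\xi)$ (which equals $\mathbb{T}_{\text{small}}$ by axioms (i) and (v)), and the lattice matching you flag as the main obstacle is exactly what the paper isolates as Propositions~\ref{Proposition: Universal maximal torus} and~\ref{Proposition: Alternative action description}, using axiom~(ii) to integrate $\kappa_\xi\colon\g_\xi\to\mathbb{R}_{\text{small}}$ to a group isomorphism $\tau_\xi\colon G_\xi\to\mathbb{T}_{\text{small}}$. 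The only organisational difference is that the paper builds the explicit homeomorphism $\mu^{-1}(\xi)\times\mathbb{T}_{\text{int}}\cong\lambda_M^{-1}(c)$ first and then quotients in two steps, whereas you pass directly to $\lambda_M^{-1}(c)/\mathbb{T}_{\text{big}}$ and compare fibres; the content is the same.
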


Part (ii) is strictly more general than (i). Part (i) is included for the sake of exposition and accessibility.

One may regard this theorem as an approach to abelianizing the generic symplectic quotients of a Hamiltonian $G$-space $M$, i.e. to presenting such quotients as symplectic quotients by a compact torus. An alternative approach to abelianization is pursued in the work of Guillemin--Jeffrey--Sjamaar \cite{guillemin-jeffrey-sjamaar}.

 \subsection{Organization}
Section \ref{Section: Background and conventions} briefly establishes some of our conventions concerning Lie theory and Hamiltonian geometry. Section \ref{Section: Gelfand--Cetlin data} subsequently motivates and contextualizes the notion of a Gelfand--Cetlin datum. Our main result is then proved in Section \ref{Section: The abelianization theorem} for smooth quotients. A generalization to stratified symplectic spaces is formulated and proved in Section \ref{Section: Generalization to stratified symplectic spaces}.

\subsection*{Acknowledgements} The authors would like to thank Megumi Harada and Jeremy Lane for exceedingly useful conversations. P.C. acknowledges support from a Utah State University startup grant, while J.W. acknowledges support from Simons Collaboration Grant \# 579801.

\section{Background and conventions}\label{Section: Background and conventions} 
This section establishes some of our notation and conventions regarding Lie theory and Hamiltonian geometry. 
\subsection{Tori}
The Lie algebra of the unitary group $\operatorname{U}(1)$ is the real vector space $i\mathbb{R}\subset\mathbb{C}$ of purely imaginary numbers. We will identify this vector space with $\mathbb{R}$ in the obvious way. It follows that $\mathbb{R}^k$ is the Lie algebra of $\operatorname{U}(1)^{k}$ for all non-negative integers $k$, and that $$\mathbb{R}^k\longrightarrow\operatorname{U}(1)^k,\quad (x_1,\ldots,x_{k})\mapsto (e^{ix_1},\ldots,e^{ix_k})$$ is the exponential map for $\operatorname{U}(1)^k$. In certain contexts, we will implicitly use the dot product to regard $\mathbb{R}^k$ as the dual of the Lie algebra of $\operatorname{U}(1)^k$.  

\subsection{General compact connected Lie groups}\label{Subsection: Lie-theoretic preliminaries}
Let $G$ be a compact connected Lie group with Lie algebra $\g$ and rank $\ell$. One has the adjoint representations $\mathrm{Ad}:G\longrightarrow\operatorname{GL}(\g)$ and $\mathrm{ad}:\g\longrightarrow\mathfrak{gl}(\g)$, as well as the coadjoint representations $\mathrm{Ad}^*:G\longrightarrow\operatorname{GL}(\g^*)$ and $\mathrm{ad}^*:\g\longrightarrow\mathfrak{gl}(\g^*)$. The $G$-representations induce $G$-actions on $\g$ and $\g^*$, and thereby give rise to stabilizer subgroups
$$G_x\coloneqq\{g\in G:\mathrm{Ad}_g(x)=x\}\quad\text{and}\quad G_{\xi}\coloneqq\{g\in G:\mathrm{Ad}_g^*(\xi)=\xi\}$$ of $G$ for all  $x\in\g$ and $\xi\in\g^*$. On the other hand, the $\g$-representations allow us to define centralizers $$\g_x\coloneqq\{y\in\g:\mathrm{ad}_y(x)=0\}\quad\text{and}\quad\g_{\xi}\coloneqq\{y\in\g:\mathrm{ad}^*_y(\xi)=0\}$$ for all $x\in\g$ and $\xi\in\g^*$. It follows that $\g_x$ (resp. $\g_{\xi}$) is the Lie algebra of $G_x$ (resp. $G_{\xi}$). Let us also note that $\dim\g_x\geq\ell$ and $\dim\g_{\xi}\geq\ell$ for all $x\in\g$ and $\xi\in\g^*$. The regular loci $$\g_{\text{reg}}\coloneqq\{x\in\g:\dim\g_x=\ell\}\quad\text{and}\quad\g^*_{\text{reg}}\coloneqq\{\xi\in\g^*:\dim\g_{\xi}=\ell\}$$ are open, dense, $G$-invariant subsets of $\g$ and $\g^*$, respectively. An element $x\in\g$ (resp. $\xi\in\g^*$) then belongs to $\g_{\text{reg}}$ (resp. $\g_{\text{reg}}^*$) if and only if $\g_x$ (resp. $\g_{\xi}$) is a Cartan subalgebra of $\g$. This is equivalent to $G_x$ (resp. $G_{\xi}$) being a maximal torus of $G$. 

A few remarks on maximal tori and and Cartan subalgebras are warranted. Let $\mathfrak{t}\subset\g$ be a Cartan subalgebra, and write $T\subset G$ for the maximal torus with Lie algebra $\mathfrak{t}$. The exponential map $\exp:\g\longrightarrow G$ then restricts to a surjective homomorphism $\exp\big\vert_{\mathfrak{t}}:\mathfrak{t}\longrightarrow T$ of abelian groups. The kernel of the latter is a free $\mathbb{Z}$-submodule of $\mathfrak{t}$ with rank equal to $\ell$. It follows that the same is true of
$$\Lambda_{\mathfrak{t}}\coloneqq\frac{1}{2\pi}\mathrm{ker}\left(\exp\big\vert_{\mathfrak{t}}\right)\subset\mathfrak{t}.$$

\subsection{Hamiltonian geometry}
Let $(M,\sigma)$ be a Poisson manifold, i.e. $\sigma\in H^0(M,\Lambda^2TM)$ is a Poisson bivector field on the manifold $M$. Note that $\sigma$ may be regarded as a skew-symmetric bilnear map from two copies of $T^*M$ to the trivial rank-$1$ vector bundle over $M$. Contracting $\sigma$ with cotangent vectors in the first argument then determines a vector bundle morphism $\sigma^{\vee}:T^*M\longrightarrow TM$. One calls $(M,\sigma)$ \textit{non-degenerate} if $\sigma^{\vee}$ is an isomorphism. In this case, $(\sigma^{\vee})^{-1}=\omega^{\vee}$ for a unique symplectic form $\omega\in H^0(M,\Lambda^2 T^*M)$, where $\omega^{\vee}:TM\longrightarrow T^*M$ is the vector bundle morphism obtained by contracting $\omega$ with tangent vectors in the first argument. This process gives rise to a bijective correspondence between symplectic structures on $M$ and non-degenerate Poisson structures on $M$. We will thereby make no distinction between symplectic manifolds and non-degenerate Poisson manifolds. 

If $(M,\sigma)$ is a Poisson manifold, then $\sigma$ can be recovered from the Poisson bracket $\{\cdot,\cdot\}$ that it induces. This bracket associates to smooth functions $f_1,f_2:M\longrightarrow\mathbb{R}$ the smooth function
$$\{f_1,f_2\}\coloneqq\sigma(\mathrm{d}f_1\wedge\mathrm{d}f_2):M\longrightarrow\mathbb{R}.$$ At the same time, one defines the Hamiltonian vector field of a smooth function $f: M\longrightarrow\mathbb{R}$ by $X_f\coloneqq-\sigma^{\vee}(\mathrm{d}f)\in H^0(M,TM)$. It follows that $$\{f_1,f_2\}=-X_{f_1}(f_2)=X_{f_2}(f_1)$$ for all smooth functions $f_1,f_2:M\longrightarrow\mathbb{R}$.

Now let $G$ be a compact connected Lie group with Lie algebra $\g$ and exponential map $\exp:\g\longrightarrow G$. If $G$ acts smoothly on a manifold $M$, then each $\eta\in\g$ determines a generating vector field $\eta_M\in H^0(M,TM)$ by
$$(\eta_M)_m\coloneqq\frac{d}{dt}\bigg\vert_{t=0}\exp(-t\eta)\cdot m$$ for all $m\in M$. A Poisson manifold $(M,\sigma)$ with a smooth $G$-action will be called a \textit{Poisson Hamiltonian $G$-space} if $\sigma$ is $G$-invariant and $M$ comes equipped with a \textit{moment map}. This last term refers to $G$-equivariant smooth map $\mu: M\longrightarrow\g^*$ satisfying $X_{\mu^{\eta}}=\eta_M$ for all $\eta\in\g$, where $\mu^{\eta}:M\longrightarrow\mathbb{R}$ is the result of pairing $\mu$ with $\eta$ pointwise. We will reserve the term \textit{Hamiltonian $G$-space} for a Poisson Hamiltonian $G$-space whose underlying Poisson structure is symplectic.

It will be advantageous to recall the Poisson Hamiltonian $G$-space structure on $\g^*$. The Poisson bracket on $\g^*$ is given by
$$\{f_1,f_2\}(\xi)=\xi([\mathrm{d}_{\xi}f_1,\mathrm{d}_{\xi}f_2])$$ for all smooth functions $f_1,f_2:M\longrightarrow\mathbb{R}$ and points $\xi\in\g^*$, where $\mathrm{d}_{\xi}f_1,\mathrm{d}_{\xi}f_2\in(\g^*)^*=\g$ denote the differentials of $f_1,f_2$ at $\xi$, respectively. One finds that $\g^*$ is a Poisson Hamiltonian $G$-space with respect to the coadjoint action, and with the identity $\g^*\longrightarrow\g^*$ serving as the Poisson moment map.   

\section{Gelfand--Cetlin data}\label{Section: Gelfand--Cetlin data}
In this section, we define Gelfand--Cetlin data and introduce their main properties. This begins with the definition itself in \ref{Subsection: Definition}. The existence of Gelfand--Cetlin data is addressed in \ref{Subsection: Existence}, while concrete techniques for constructing such data are discussed in \ref{Subsection: Integrality} and \ref{Subsection: GC torus}. In \ref{Subsection: GS}, we describe concrete Gelfand--Cetlin data for unitary groups.  

\subsection{Definition and relation to integrable systems}\label{Subsection: Definition}
Let $G$ be a compact connected Lie group with Lie algebra $\g$ and rank $\ell$. Consider the quantities 
$$\mathrm{u}\coloneqq\frac{1}{2}(\dim\g-\ell)\quad\text{and}\quad\mathrm{b}\coloneqq\frac{1}{2}(\dim\g+\ell),$$ and introduce the following tori of \textit{small}, \textit{intermediate}, and \textit{big} ranks: $$\mathbb{T}_{\text{small}}\coloneqq\operatorname{U}(1)^{\ell},\quad\mathbb{T}_{\text{int}}\coloneqq\operatorname{U}(1)^{\mathrm{u}},\quad\text{and}\quad\mathbb{T}_{\text{big}}\coloneqq\mathbb{T}_{\text{small}}\times\mathbb{T}_{\text{int}}\cong\operatorname{U}(1)^{\mathrm{b}}.$$ The respective Lie algebras of these tori are
$$\mathbb{R}_{\text{small}}\coloneqq\mathbb{R}^{\ell},\quad\mathbb{R}_{\text{int}}\coloneqq\mathbb{R}^{\mathrm{u}},\quad\text{and}\quad\mathbb{R}_{\text{big}}\coloneqq\mathbb{R}_{\text{small}}\times\mathbb{R}_{\text{int}}\cong\mathbb{R}^{\mathrm{b}}.$$ 

\begin{definition}\label{Definition: Main definition}
A \textit{Gelfand--Cetlin datum} is a pair $(\lambda_{\text{big}},\g^*_{\text{s-reg}})$, consisting of a continuous map $\lambda_{\text{big}}=(\lambda_1,\ldots,\lambda_{\mathrm{b}}):\g^*\longrightarrow\mathbb{R}_{\text{big}}$ and open dense subset $\g^*_{\text{s-reg}}\subset\g^*$ that satisfy the following conditions:
\begin{itemize}
\item[\textup{(i)}] $\lambda_1,\ldots,\lambda_{\ell}$ are $G$-invariant on $\g^*$ and smooth on $\g^*_{\text{reg}}$;
\item[\textup{(ii)}] $\{\mathrm{d}_{\xi}\lambda_1,\ldots,\mathrm{d}_{\xi}\lambda_{\ell}\}$ is a $\mathbb{Z}$-basis of the lattice $\Lambda_{\g_{\xi}}\subset\g_{\xi}$ for all $\xi\in\g^*_{\text{reg}}$;
\item[\textup{(iii)}] $\g^*_{\text{s-reg}}\subset\g^*_{\text{reg}}$;
\item[\textup{(iv)}] $\lambda_{\text{big}}\big\vert_{\g^*_{\text{s-reg}}}:\g^*_{\text{s-reg}}\longrightarrow\mathbb{R}_{\text{big}}$ is a smooth submersion and moment map for a Poisson Hamiltonian $\mathbb{T}_{\text{big}}$-space structure on $\g^*_{\text{s-reg}}$;
\item[\textup{(v)}] $\lambda_{\text{big}}\big\vert_{\g^*_{\text{s-reg}}}:\g^*_{\text{s-reg}}\longrightarrow\lambda_{\text{big}}(\g^*_{\text{s-reg}})$ is a principal $\mathbb{T}_{\text{int}}$-bundle;
\item[\textup{(vi)}] if $M$ is a Hamiltonian $G$-space with moment map $\mu:M\longrightarrow\g^*$, then $$(\lambda_{\text{big}}\circ\mu)\big\vert_{\mu^{-1}(\g^*_{\text{s-reg}})}: \mu^{-1}(\g^*_{\text{s-reg}})\longrightarrow\mathbb{R}_{\text{big}}$$ is a moment map for a Hamiltonian $\mathbb{T}_{\text{big}}$-space structure on $\mu^{-1}(\g_{\text{s-reg}})$.
\end{itemize} 
In this case, we adopt the notation
$$\lambda_{\text{small}}\coloneqq (\lambda_1,\ldots,\lambda_{\ell}):\g^*\longrightarrow\mathbb{R}_{\text{small}},\quad\lambda_{\text{int}}\coloneqq(\lambda_{\ell+1},\ldots,\lambda_{\mathrm{b}}):\g^*\longrightarrow\mathbb{R}_{\text{int}},$$ $$M_{\text{s-reg}}\coloneqq\mu^{-1}(\g^*_{\text{s-reg}}),\quad\text{and}\quad\lambda_M\coloneqq(\lambda_{\text{big}}\circ\mu)\big\vert_{M_{\text{s-reg}}}:M_{\text{s-reg}}\longrightarrow\mathbb{R}_{\text{big}}.$$ We also refer to the elements of $\g^*_{\text{s-reg}}$ as the \textit{strongly regular} elements of $\g^*$.
\end{definition}

\begin{remark}
Condition (v) in Definition \ref{Definition: Main definition} is only slightly weaker than the existence of global action-angle coordinates on $\g^*_{\text{s-reg}}$. This existence question features prominently in \cite{Duistermaat,Lane}.
\end{remark}

It is instructive to consider this definition in relation to the theory of completely integrable systems. One is thereby led to the following result.

\begin{proposition}
Let $(\lambda_{\emph{big}},\g^*_{\emph{s-reg}})$ be a Gelfand--Cetlin datum. If $\mathcal{O}\subset\g^*$ is a coadjoint orbit and $\mathcal{O}_{\emph{s-reg}}\coloneqq\mathcal{O}\cap\g^*_{\emph{s-reg}}$, then $$\lambda_{\emph{int}}\big\vert_{\mathcal{O}_{\emph{s-reg}}}:\mathcal{O}_{\emph{s-reg}}\longrightarrow\lambda_{\emph{int}}(\mathcal{O}_{\emph{s-reg}})\subset\mathbb{R}_{\emph{int}}$$ is a completely integrable system, principal $\mathbb{T}_{\emph{int}}$-bundle, and moment map for a Hamiltonian action of $\mathbb{T}_{\emph{int}}$ on $\mathcal{O}_{\emph{s-reg}}$.
\end{proposition}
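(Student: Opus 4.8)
The plan is to apply the axioms of a Gelfand--Cetlin datum directly to the Hamiltonian $G$-space $\mathcal{O}$ itself, using the standard fact that a coadjoint orbit, equipped with its canonical (Kirillov--Kostant--Souriau) symplectic form, is a Hamiltonian $G$-space whose moment map is the inclusion $\iota:\mathcal{O}\hookrightarrow\g^*$. We may assume $\mathcal{O}_{\text{s-reg}}\neq\emptyset$, since otherwise all three assertions are vacuous; then $\mathcal{O}$ meets $\g^*_{\text{reg}}$ by (iii), and $\mathcal{O}\subseteq\g^*_{\text{reg}}$ by $G$-invariance of $\g^*_{\text{reg}}$, so $\mathcal{O}$ is a coadjoint orbit of dimension $\dim\g-\ell=2\mathrm{u}$. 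Condition (vi), applied to $M=\mathcal{O}$, makes $\mathcal{O}_{\text{s-reg}}=\iota^{-1}(\g^*_{\text{s-reg}})$ a Hamiltonian $\mathbb{T}_{\text{big}}$-space with moment map $\lambda_{\text{big}}\vert_{\mathcal{O}_{\text{s-reg}}}$.

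First I would eliminate the $\mathbb{T}_{\text{small}}$-factor. By (i) the functions $\lambda_1,\dots,\lambda_\ell$ are $G$-invariant, hence constant on the single $G$-orbit $\mathcal{O}$; write $c\in\mathbb{R}_{\text{small}}$ for this common value. The first $\ell$ components of the $\mathbb{T}_{\text{big}}$-moment map on $\mathcal{O}_{\text{s-reg}}$ are then constant, so the corresponding generating vector fields vanish, and since $\mathbb{T}_{\text{small}}$ is connected it acts trivially on $\mathcal{O}_{\text{s-reg}}$. Hence the action descends to a Hamiltonian action of $\mathbb{T}_{\text{big}}/\mathbb{T}_{\text{small}}\cong\mathbb{T}_{\text{int}}$ on $\mathcal{O}_{\text{s-reg}}$ with moment map $\lambda_{\text{int}}\vert_{\mathcal{O}_{\text{s-reg}}}$; this is already the third assertion.

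The heart of the argument is then to reconcile this $\mathbb{T}_{\text{int}}$-action with the ambient structure from (iv) and (v). The open set $\g^*_{\text{s-reg}}$ carries the Lie--Poisson structure restricted from $\g^*$, with respect to which (iv) holds, and whose symplectic leaves are the connected components of the sets $\mathcal{O}\cap\g^*_{\text{s-reg}}$. I would argue that $\mathcal{O}_{\text{s-reg}}$ is invariant under the $\mathbb{T}_{\text{big}}$-action on $\g^*_{\text{s-reg}}$, since each generating vector field $X_{\langle\lambda_{\text{big}},\eta\rangle}$ is everywhere tangent to the coadjoint orbit through its base point; and that, because a Hamiltonian vector field on a Poisson manifold restricts on each symplectic leaf to the Hamiltonian vector field of the restricted function, the $\mathbb{T}_{\text{big}}$-action on $\g^*_{\text{s-reg}}$ restricts to the $\mathbb{T}_{\text{big}}$-action on $\mathcal{O}_{\text{s-reg}}$ furnished by (vi), hence to the $\mathbb{T}_{\text{int}}$-action of the previous paragraph. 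Granting this, the principal-bundle claim falls out of (v) by a saturation argument: if $\xi\in\mathcal{O}_{\text{s-reg}}$ and $\xi'\in\g^*_{\text{s-reg}}$ satisfy $\lambda_{\text{big}}(\xi')=\lambda_{\text{big}}(\xi)$, then (v) forces $\xi'=t\cdot\xi$ for some $t\in\mathbb{T}_{\text{int}}$, whence $\xi'\in\mathcal{O}_{\text{s-reg}}$ by the invariance just established; thus $\mathcal{O}_{\text{s-reg}}$ is a $\lambda_{\text{big}}$-saturated subset, and restricting the principal $\mathbb{T}_{\text{int}}$-bundle of (v) over the image $\lambda_{\text{big}}(\mathcal{O}_{\text{s-reg}})=\{c\}\times\lambda_{\text{int}}(\mathcal{O}_{\text{s-reg}})$ presents $\lambda_{\text{int}}\vert_{\mathcal{O}_{\text{s-reg}}}:\mathcal{O}_{\text{s-reg}}\to\lambda_{\text{int}}(\mathcal{O}_{\text{s-reg}})$ as a principal $\mathbb{T}_{\text{int}}$-bundle.

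Finally I would check the completely integrable system condition: $\mathcal{O}_{\text{s-reg}}$ is symplectic of dimension $2\mathrm{u}$, the $\mathrm{u}=\dim\mathbb{R}_{\text{int}}$ components of $\lambda_{\text{int}}\vert_{\mathcal{O}_{\text{s-reg}}}$ pairwise Poisson-commute (being the components of a torus moment map, equivalently because $\{\lambda_i,\lambda_j\}\equiv 0$ on $\g^*_{\text{s-reg}}$ and Poisson brackets are preserved under restriction to a symplectic leaf), and $\lambda_{\text{int}}\vert_{\mathcal{O}_{\text{s-reg}}}$ is a submersion with $\mathrm{u}$-dimensional fibres, the free $\mathbb{T}_{\text{int}}$-orbits, onto the open subset $\lambda_{\text{int}}(\mathcal{O}_{\text{s-reg}})\subseteq\mathbb{R}_{\text{int}}$, so its component differentials are everywhere linearly independent. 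I expect the only genuinely fiddly step to be the one in the third paragraph: keeping track of the two a priori different $\mathbb{T}_{\text{big}}$-structures on $\mathcal{O}_{\text{s-reg}}$ (obtained from (iv) by restriction, and from (vi) directly) and verifying that $\mathcal{O}_{\text{s-reg}}$ is simultaneously $\mathbb{T}_{\text{int}}$-invariant and $\lambda_{\text{big}}$-saturated. Everything else is routine bookkeeping with the axioms.
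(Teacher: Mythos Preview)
Your argument is correct and rests on the same ingredients as the paper's proof: tangency of Hamiltonian vector fields on $\g^*$ to coadjoint orbits, constancy of $\lambda_{\text{small}}$ on $\mathcal{O}$, the principal bundle axiom (v), and the dimension count $\dim\mathbb{T}_{\text{int}}=\mathrm{u}=\frac{1}{2}\dim\mathcal{O}$.

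The one organizational difference is that you invoke axiom (vi) applied to $M=\mathcal{O}$ to obtain the Hamiltonian $\mathbb{T}_{\text{big}}$-structure on $\mathcal{O}_{\text{s-reg}}$, and are then forced to reconcile this with the ambient action from (iv) in order to use (v). The paper bypasses this: it works directly with (iv), observing that the $\mathbb{T}_{\text{big}}$-action on $\g^*_{\text{s-reg}}$ preserves $\mathcal{O}_{\text{s-reg}}$ (by the tangency remark) and that the restricted action is automatically Hamiltonian on the symplectic leaf. This yields the moment-map and principal-bundle claims in one stroke, without ever needing to compare two a priori different actions. Your detour through (vi) is harmless but unnecessary; the ``fiddly step'' you anticipate simply does not arise in the paper's route.
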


\begin{proof}
Note that the Hamiltonian vector field of any smooth function $\g^*\longrightarrow\mathbb{R}$ is tangent to $\mathcal{O}$. It follows that $\mathcal{O}_{\text{s-reg}}$ is stable under the action of $\mathbb{T}_{\text{big}}$ on $\g^*_{\text{s-reg}}$. Definition \ref{Definition: Main definition}(iv) now implies that $\mathcal{O}_{\text{s-reg}}$ is a Hamiltonian $\mathbb{T}_{\text{big}}$-space with moment map $\lambda_{\text{big}}\big\vert_{\mathcal{O}_{\text{s-reg}}}$. We conclude that $\lambda_{\text{int}}\big\vert_{\mathcal{O}_{\text{s-reg}}}$ is a moment map for the Hamiltonian action of $\mathbb{T}_{\text{int}}\subset\mathbb{T}_{\text{big}}$ on $\mathcal{O}_{\text{s-reg}}$. 

Since $\lambda_{\text{small}}$ is constant-valued on $\mathcal{O}$, Definition \ref{Definition: Main definition}(v) tells us that $\lambda_{\text{int}}\big\vert_{\mathcal{O}_{\text{s-reg}}}:\mathcal{O}_{\text{s-reg}}\longrightarrow\lambda_{\text{int}}(\mathcal{O}_{\text{s-reg}})$ is a principal $\mathbb{T}_{\text{int}}$-bundle. It therefore remains only to prove that $\dim\mathbb{T}_{\text{int}}=\frac{1}{2}\dim\mathcal{O}_{\text{s-reg}}$. This follows immediately from the fact that $\dim\mathbb{T}_{\text{int}}=\mathrm{u}=\frac{1}{2}(\dim\g-\ell)$.
\end{proof}

\subsection{Existence of Gelfand--Cetlin data}\label{Subsection: Existence}
It is natural to wonder about the generality in which Gelfand--Cetlin data exist. The earliest constructions are due to Guillemin--Sternberg \cite{GuilleminSternbergGC,GuilleminSternbergThimm}, and apply to all unitary groups $\operatorname{U}(n)$ and special orthogonal groups $\operatorname{SO}(n)$. The underlying techniques are based on Thimm's method, as described in \cite{GuilleminSternbergThimm}. Further details are outlined in Sections \ref{Subsection: Integrality}--\ref{Subsection: GS} of this paper. The case of symplectic groups is considerably more subtle and addressed in Harada's paper \cite{Harada}. 

Some recent work of Hoffman--Lane \cite{Lane} implies the existence of Gelfand--Cetlin data for an arbitrary compact connected Lie group $G$; the reader is referred to \cite[Section 6.2]{Lane} for the relevant details. This Hoffman--Lane paper is part of a broader program aimed at generalizing the results of Harada--Kaveh \cite{HaradaKaveh}.

\subsection{Construction of Gelfand--Cetlin data: integrality}\label{Subsection: Integrality}
We now discuss the construction of functions $\lambda_1,\ldots,\lambda_{\ell}:\g^*\longrightarrow\mathbb{R}$ satisfying Conditions (i) and (ii) in Definition \ref{Definition: Main definition}. Let $G$ be a compact connected Lie group with Lie algebra $\g$ and rank $\ell$. Choose a $G$-invariant inner product on $\g$, a Cartan subalgebra $\mathfrak{t}\subset\g$, and a closed, fundamental Weyl chamber $\mathfrak{t}_{+}\subset\mathfrak{t}$. The chamber $\mathfrak{t}_{+}$ is known to be a fundamental domain for the adjoint action of $G$ on $\g$. Our inner product thereby identifies $\mathfrak{t}_{+}$ with a fundamental domain $\mathfrak{t}_+^*\subset\g^*$ for the $G$-action on $\g^*$. We may therefore define a continuous surjection
$\pi:\g^*\longrightarrow\mathfrak{t}_+^*$ by the property that $(G\cdot\xi)\cap\mathfrak{t}_+^*=\{\pi(\xi)\}$ for all $\xi\in\g^*$, where $G\cdot\xi\subset\g^*$ is the coadjoint orbit of $\xi$. One sometimes calls $\pi$ the \textit{sweeping map} on $\g^*$ with respect to $\mathfrak{t}$; its fibers are exactly the coadjoint orbits of $G$, and $\pi(\g^*_{\text{reg}})$ is the interior $(\mathfrak{t}_+^*)^{\circ}$ of $\mathfrak{t}_+^*$. One also finds that the commutative diagram
$$\begin{tikzcd}[row sep=large,column sep=large]
\g^*_{\text{reg}}\arrow[hookrightarrow]{r} \arrow[d, swap, "\pi\big\vert_{\g^*_{\text{reg}}}"] & \g^* \arrow[d, "\pi"] \\
(\mathfrak{t}_+^*)^{\circ} \arrow[hookrightarrow]{r} & \mathfrak{t}_+^*
\end{tikzcd}$$ is Cartesian. The left vertical map $$\pi\big\vert_{\g^*_{\text{reg}}}:\g^*_{\text{reg}}\longrightarrow(\mathfrak{t}_+^*)^{\circ}$$ is a easily seen to be a smooth, surjective submersion.

Now choose a $\mathbb{Z}$-basis $\{\phi_1,\ldots,\phi_{\ell}\}$ of the $\mathbb{Z}$-submodule $\Lambda_{\mathfrak{t}}\subset\mathfrak{t}$. Note that the pairing between $\mathfrak{t}$ and $\mathfrak{t}^*$ allows one to regard $\phi_1,\ldots,\phi_{\ell}$ as functions on $\mathfrak{t}_+^*$. With this in mind, each $k\in\{1,\ldots,\ell\}$ determines a function $$\lambda_k\coloneqq\phi_k\circ\pi:\mathfrak{g}^*\longrightarrow\mathbb{R}.$$ The previous paragraph implies that $\lambda_1,\ldots,\lambda_{\ell}$ are smooth on $\g^*_{\text{reg}}$, while being $G$-invariant and continuous as functions on $\mathfrak{g}^*$. Given any $\xi\in\g^*_{\text{reg}}$, the differentials $\mathrm{d}_{\xi}\lambda_1,\ldots,\mathrm{d}_{\xi}\lambda_{\ell}\in(\g^*)^*=\g$ may be described as follows.

\begin{proposition}\label{Proposition: Lattice}
If $\xi\in\g_{\emph{reg}}^*$, then $\{\mathrm{d}_{\xi}\lambda_1,\ldots,\mathrm{d}_{\xi}\lambda_{\ell}\}$ is a $\mathbb{Z}$-basis of the lattice $\Lambda_{\g_{\xi}}\subset\g_{\xi}$.
\end{proposition}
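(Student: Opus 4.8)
The plan is to reduce, by $G$-equivariance, to the case in which $\xi$ lies in the interior $(\mathfrak{t}_+^*)^{\circ}$ of the Weyl chamber, and there to identify the differentials explicitly. Since $\lambda_k=\phi_k\circ\pi$ is $G$-invariant, the map $\xi\mapsto\mathrm{d}_{\xi}\lambda_k$ is $G$-equivariant in the sense that $\mathrm{d}_{\mathrm{Ad}^*_g\xi}\lambda_k=\mathrm{Ad}_g(\mathrm{d}_{\xi}\lambda_k)$ for all $g\in G$; likewise $\g_{\mathrm{Ad}^*_g\xi}=\mathrm{Ad}_g(\g_{\xi})$, and because $\exp(\mathrm{Ad}_g X)=g\exp(X)g^{-1}$ one has $\Lambda_{\g_{\mathrm{Ad}^*_g\xi}}=\mathrm{Ad}_g(\Lambda_{\g_{\xi}})$. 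As $\pi(\xi)$ lies in $(\mathfrak{t}_+^*)^{\circ}$ and in the coadjoint orbit of $\xi$, it therefore suffices to treat $\xi\in(\mathfrak{t}_+^*)^{\circ}$. For such $\xi$ the chosen $G$-invariant inner product identifies $\g_{\xi}$ with the adjoint centralizer of the corresponding element of $\mathfrak{t}$, which contains $\mathfrak{t}$; since $\xi$ is regular and $\dim\mathfrak{t}=\ell$, we conclude $\g_{\xi}=\mathfrak{t}$ and hence $\Lambda_{\g_{\xi}}=\Lambda_{\mathfrak{t}}$. Thus the assertion is reduced to the claim that $\mathrm{d}_{\xi}\lambda_k=\phi_k$ in $\mathfrak{t}$ for every $k$, which by the choice of $\{\phi_1,\dots,\phi_{\ell}\}$ immediately gives the proposition.

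First I would show that $\mathrm{d}_{\xi}\lambda_k$ lies in $\g_{\xi}$. More generally, for any smooth $G$-invariant function $f$ defined near a point $\xi\in\g^*$, differentiating $t\mapsto f(\mathrm{Ad}^*_{\exp(t\eta)}\xi)$ at $t=0$ shows that $\mathrm{d}_{\xi}f\in\g=(\g^*)^*$ annihilates the tangent space $\{\mathrm{ad}^*_{\eta}\xi:\eta\in\g\}$ to the coadjoint orbit of $\xi$; by nondegeneracy of the natural pairing, this annihilator is exactly $\g_{\xi}$. Applying this to $f=\lambda_k$ on $\g^*_{\text{reg}}$, where $\lambda_k$ is smooth and $G$-invariant, and invoking $\g_{\xi}=\mathfrak{t}$ for $\xi\in(\mathfrak{t}_+^*)^{\circ}$, we obtain $\mathrm{d}_{\xi}\lambda_k\in\mathfrak{t}$.

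Next I would pin down $\mathrm{d}_{\xi}\lambda_k\in\mathfrak{t}$ by pairing it against $\mathfrak{t}^*$. The open chamber $(\mathfrak{t}_+^*)^{\circ}$ is open in $\mathfrak{t}^*$, and the sweeping map $\pi$ restricts to the identity on it; hence on $(\mathfrak{t}_+^*)^{\circ}$ the function $\lambda_k=\phi_k\circ\pi$ coincides with the linear function $\phi_k$. Consequently, for each $\tau\in\mathfrak{t}^*$ the segment $t\mapsto\xi+t\tau$ stays in $(\mathfrak{t}_+^*)^{\circ}$ for small $t$, so $\langle\mathrm{d}_{\xi}\lambda_k,\tau\rangle=\frac{d}{dt}\big\vert_{t=0}\lambda_k(\xi+t\tau)=\langle\phi_k,\tau\rangle$. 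Since $\mathrm{d}_{\xi}\lambda_k$ and $\phi_k$ both belong to $\mathfrak{t}$ and pair identically with $\mathfrak{t}^*$, they are equal, which completes the argument.

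I expect the only delicate point to be the second step: a priori $\mathrm{d}_{\xi}\lambda_k$ is merely an element of $\g$ whose restriction to $\mathfrak{t}^*$ equals $\phi_k$, so one must rule out components transverse to $\mathfrak{t}$. This is precisely what $G$-invariance of $\lambda_k$ supplies, by confining $\mathrm{d}_{\xi}\lambda_k$ to $\g_{\xi}=\mathfrak{t}$; everything else is a routine unwinding of definitions.
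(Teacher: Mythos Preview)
Your proof is correct and follows essentially the same route as the paper's: both reduce by $G$-equivariance to $\xi\in(\mathfrak{t}_+^*)^{\circ}$, use $G$-invariance of $\lambda_k$ to force $\mathrm{d}_{\xi}\lambda_k$ into $\g_{\xi}=\mathfrak{t}$ (equivalently, to make it vanish on $T_{\xi}(G\cdot\xi)$), and then identify it with $\phi_k$ by computing along directions in $\mathfrak{t}^*$ where $\pi$ is the identity. The only cosmetic difference is that the paper phrases the middle step via the decomposition $\g^*=\mathfrak{t}^*\oplus T_{\xi}(G\cdot\xi)$, whereas you phrase it as $\mathrm{d}_{\xi}\lambda_k\in\g_{\xi}$; these are dual formulations of the same fact.
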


\begin{proof}
Choose $g\in G$ for which $\mathrm{Ad}_g^*(\xi)\in(\mathfrak{t}_+^*)^{\circ}$. Since each function $\lambda_k$ is $G$-invariant, one has $$\mathrm{d}_{\xi}\lambda_k=\mathrm{d}_{\mathrm{Ad}_g^*(\xi)}\lambda_k\circ\mathrm{Ad}^*_g=\mathrm{Ad}_{g^{-1}}(\mathrm{d}_{\mathrm{Ad}_g^*(\xi)}\lambda_k)$$ for all $k\in\{1,\ldots,\ell\}$. We also observe that $\mathrm{Ad}_{g^{-1}}:\g\longrightarrow\g$ restricts to a $\mathbb{Z}$-module isomorphism $\lambda_{\mathfrak{t}}\overset{\cong}\longrightarrow\lambda_{\g_{\xi}}$.
It therefore suffices to take $\xi\in(\mathfrak{t}_+^*)^{\circ}$ and prove that $\mathrm{d}_{\xi}\lambda_k=\phi_k$ for all $k\in\{1,\ldots,\ell\}$.

Assume that $\xi\in(\mathfrak{t}_+^*)^{\circ}$, and fix $k\in\{1,\ldots,\ell\}$. Our invariant inner product allows us to regard $\mathfrak{t}^*$ as a subspace of $\g^*$. We then note that $\g^*=\mathfrak{t}^*\oplus T_{\xi}(G\cdot\xi)$, and that $T_{\xi}(G\cdot\xi)$ is contained in the kernel of $\mathrm{d}_{\xi}\lambda_k$. Let us also note that $T_{\xi}(G\cdot\xi)$ is the annihilator of $\mathfrak{t}$ in $\g^*$, and as such is contained in the kernel of $\phi_k$. These last two sentences reduce us to proving that $\mathrm{d}_{\xi}\lambda_k(\eta)=\phi_k(\eta)$ for all $\eta\in\mathfrak{t}^*$. On the other hand, we have $\mathrm{d}_{\xi}\lambda_k=\phi_k\circ\mathrm{d}_{\xi}\pi$. It therefore suffices to prove that $\mathrm{d}_{\xi}\pi(\eta)=\eta$ for all $\eta\in\mathfrak{t}^*$. But this is an immediate consequence of the following two observations: $(\mathfrak{t}_{+}^*)^{\circ}$ is an open subset of $\mathfrak{t}^*$, and $\pi(\eta)=\eta$ for all $\eta\in(\mathfrak{t}_{+}^*)^{\circ}$.
\end{proof}

\subsection{Construction of Gelfand--Cetlin data: Thimm's method}\label{Subsection: GC torus}
Retain the objects and notation discussed in Section \ref{Subsection: Integrality}. Let $G=G_0\supset G_1\supset\cdots\supset G_m$ be a descending filtration of $G$ by connected closed subgroups with respective Lie algebras $\g=\g_0\supset\g_1\supset\cdots\supset\g_m$. Let us also choose a Cartan subalgebra $\mathfrak{t}_j\subset\g_j$ and closed, fundamental Weyl chamber $(\mathfrak{t}_j)_{+}\subset\mathfrak{t}_j$ for each $j\in\{0,\ldots,m\}$. Our $G$-invariant inner product on $\g$ gives rise to a $G_j$-module isomorphism $\mathfrak{g}_j\cong\mathfrak{g}_j^*$, by means of which $\mathfrak{t}_j$ and $(\mathfrak{t}_j)_+$ correspond to subsets $\mathfrak{t}_j^*$ and $(\mathfrak{t}_j^*)_{+}$ of $\mathfrak{g}_j^*$. As in Section \ref{Subsection: Integrality}, one may define a continuous surjection $\pi_j:\mathfrak{g}_j^*\longrightarrow(\mathfrak{t}_j^*)_{+}$ by the condition that $(G_j\cdot\xi)\cap(\mathfrak{t}_j^*)_{+}=\{\pi_j(\xi)\}$ for all $\xi\in\mathfrak{g}_j^*$. 

Let $\ell=\ell_0\geq\ell_1\geq\cdots\geq\ell_m$ be the ranks of $G=G_0\supset G_1\supset\cdots\supset G_m$, respectively. Let us also choose a $\mathbb{Z}$-basis $\{\phi_{j1},\ldots,\phi_{j\ell_j}\}$ of the lattice $\Lambda_{\mathfrak{t}_j}\subset\mathfrak{t}_j$ for each $j\in\{0,\ldots,m\}$. As in Section \ref{Subsection: Integrality}, we define the functions 
$$\nu_{jk}\coloneqq\phi_{jk}\circ\pi_j:\g_j^*\longrightarrow\mathbb{R}$$ for $k\in\{1,\ldots,\ell_j\}$. The same section implies that $\nu_{j1},\ldots,\nu_{j\ell_j}$ are $G_j$-invariant and continuous on $\g_j^*$, as well as smooth on $(\mathfrak{g}_j^*)_{\text{reg}}$. We also have the following equivalent version of Proposition \ref{Proposition: Lattice}.

\begin{proposition}\label{Proposition: Lattice 2}
If $j\in\{0,\ldots,m\}$ and $\xi\in(\g_j^*)_{\emph{reg}}$, then $\{\mathrm{d}_{\xi}\nu_{j1},\ldots,\mathrm{d}_{\xi}\nu_{j\ell_j}\}$ is a $\mathbb{Z}$-basis of the lattice $\Lambda_{(\g_j)_{\xi}}\subset(\g_j)_{\xi}$.
\end{proposition}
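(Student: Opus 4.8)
The plan is to deduce Proposition \ref{Proposition: Lattice 2} from Proposition \ref{Proposition: Lattice} by applying the latter to the compact connected Lie group $G_j$ in place of $G$. To do this, I would first check that the data attached to $G_j$ in Section \ref{Subsection: GC torus} are precisely of the type considered in Section \ref{Subsection: Integrality}. Concretely: $G_j$ is a closed subgroup of the compact group $G$, hence compact, and it is connected by hypothesis; the restriction of the chosen $G$-invariant inner product on $\g$ to $\g_j\times\g_j$ is $\mathrm{Ad}(G_j)$-invariant, because $G_j\subseteq G$, and it is exactly this restricted inner product that was used to form $\g_j^*$, $\mathfrak{t}_j^*$, $(\mathfrak{t}_j^*)_{+}$ and the sweeping map $\pi_j:\g_j^*\longrightarrow(\mathfrak{t}_j^*)_{+}$; the subspace $\mathfrak{t}_j$ is a Cartan subalgebra of $\g_j$ with closed fundamental Weyl chamber $(\mathfrak{t}_j)_{+}$; the set $\{\phi_{j1},\ldots,\phi_{j\ell_j}\}$ is a $\mathbb{Z}$-basis of $\Lambda_{\mathfrak{t}_j}$; and $\nu_{jk}=\phi_{jk}\circ\pi_j$ for each $k$. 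In other words, the tuple $(G_j,\g_j,\ell_j,\mathfrak{t}_j,(\mathfrak{t}_j)_{+},\pi_j,\{\phi_{jk}\},\{\nu_{jk}\})$ plays for $G_j$ exactly the role that $(G,\g,\ell,\mathfrak{t},\mathfrak{t}_{+},\pi,\{\phi_k\},\{\lambda_k\})$ plays for $G$ in Section \ref{Subsection: Integrality}.

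Granting this, Proposition \ref{Proposition: Lattice}, applied verbatim with $G$ replaced by $G_j$, says that for every $\xi\in(\g_j^*)_{\text{reg}}$ the set $\{\mathrm{d}_{\xi}\nu_{j1},\ldots,\mathrm{d}_{\xi}\nu_{j\ell_j}\}$ is a $\mathbb{Z}$-basis of the lattice $\Lambda_{(\g_j)_{\xi}}\subseteq(\g_j)_{\xi}$, which is the assertion to be proved. Here $(\g_j)_{\xi}$ denotes the centralizer of $\xi$ for the coadjoint action of $\g_j$ on $\g_j^*$, and $\Lambda_{(\g_j)_{\xi}}=\tfrac{1}{2\pi}\ker\!\big(\exp\big\vert_{(\g_j)_{\xi}}\big)$ with respect to the exponential map of $G_j$; since $G_j$ is a closed subgroup of $G$, this exponential map is the restriction of that of $G$, so there is no ambiguity in the meaning of $\Lambda_{(\g_j)_{\xi}}$, and it agrees with the lattice produced by Proposition \ref{Proposition: Lattice} for $G_j$.

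There is no substantive obstacle here: the content of the proposition is entirely contained in Proposition \ref{Proposition: Lattice}, and the only task is the bookkeeping above, whose single delicate point is verifying that the restriction of a $G$-invariant inner product to $\g_j$ remains invariant under $\mathrm{Ad}(G_j)$ (so that the construction of Section \ref{Subsection: Integrality} genuinely applies to $G_j$) and that the regular locus $(\g_j^*)_{\text{reg}}$, the centralizers $(\g_j)_{\xi}$, and the lattice $\Lambda_{(\g_j)_{\xi}}$ are the intrinsic ones associated to $G_j$. Once these identifications are made, the proof is a one-line invocation of Proposition \ref{Proposition: Lattice}.
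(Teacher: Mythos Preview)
Your proposal is correct and matches the paper's approach exactly: the paper does not give a separate proof but simply introduces Proposition~\ref{Proposition: Lattice 2} as ``the following equivalent version of Proposition~\ref{Proposition: Lattice},'' which is precisely your observation that one applies Proposition~\ref{Proposition: Lattice} with $G_j$ in place of $G$. The bookkeeping you spell out is implicit in the paper's phrasing and is entirely sound.
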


Let $\sigma_j:\g^*\longrightarrow\g_j^*$ denote the transpose of the inclusion $\g_j\hookrightarrow\g$ for each $j\in\{0,\ldots,m\}$. Consider the functions on $\g^*$ defined by $$\lambda_{jk}\coloneqq\nu_{jk}\circ\sigma_j:\g^*\longrightarrow\mathbb{R}$$ for $j\in\{0,\ldots,m\}$ and $k\in\{1,\ldots,\ell_j\}$. It will be convenient to enumerate these functions as
\begin{equation}\label{Equation: Enumeration}\lambda_{\text{big}}\coloneqq(\lambda_1,\ldots,\lambda_{\mathrm{c}})\coloneqq(\lambda_{01},\ldots,\lambda_{0\ell},\lambda_{11},\ldots,\lambda_{1\ell_1}\ldots,\lambda_{m1},\ldots,\lambda_{m\ell_m}):\g^*\longrightarrow\mathbb{R}^{\mathrm{c}},\end{equation} where $\mathrm{c}\coloneqq\ell_0+\cdots+\ell_m$.

The discussion preceding Proposition \ref{Proposition: Lattice 2} implies that $\lambda_{\text{big}}$ is smooth on the open subset $$\mathcal{U}\coloneqq\bigcap_{j=0}^m\sigma_j^{-1}((\g_j^*)_{\text{reg}})\subset\g^*$$ Let us also define $$\g^*_{\text{s-reg}}\coloneqq\{\xi\in\mathcal{U}:\mathrm{d}_{\xi}\lambda_{\text{big}}\text{ is surjective}\}.$$ These last few sentences give context for the following consequence of \cite[Theorem 3.4]{GuilleminSternbergGC}.

\begin{proposition}\label{Proposition: Poisson moment map}
Let $\lambda_{\emph{big}}$ and $\g^*_{\emph{s-reg}}$ be as defined above.
\begin{itemize}
\item[\textup{(i)}] The restriction $\lambda_{\emph{big}}\big\vert_{\g^*_{\emph{s-reg}}}:\g^*_{\emph{s-reg}}\longrightarrow\mathbb{R}^{\mathrm{c}}$ is a moment map for a Poisson Hamiltonian $\operatorname{U}(1)^{\mathrm{c}}$-space structure on $\g^*_{\emph{s-reg}}$.
\item[\textup{(ii)}] If $M$ is a Hamiltonian $G$-space with moment map $\mu:M\longrightarrow\g^*$, then $$(\lambda_{\emph{big}}\circ\mu)\big\vert_{\mu^{-1}(\g^*_{\emph{s-reg}})}: \mu^{-1}(\g^*_{\emph{s-reg}})\longrightarrow\mathbb{R}^{\mathrm{c}}$$ is a moment map for a Hamiltonian $\operatorname{U}(1)^{\mathrm{c}}$-space structure on $\mu^{-1}(\g^*_{\emph{s-reg}})$.
\end{itemize}
\end{proposition}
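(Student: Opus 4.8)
The plan is to regard this proposition as a consequence of Thimm's method in the form of \cite[Theorem 3.4]{GuilleminSternbergGC}: the real content for (i) is that $\lambda_1,\ldots,\lambda_{\mathrm{c}}$ pairwise Poisson-commute on $\mathcal{U}$ and that each generates a periodic flow on $\g^*_{\text{s-reg}}$, and (ii) is then obtained by pulling everything back along the moment map $\mu:M\longrightarrow\g^*$, which is a Poisson map. I would first verify the Poisson-commutativity directly: given $\lambda_{jk}$ and $\lambda_{j'k'}$ with $j\leq j'$, one has $\g_{j'}\s\g_j$, so $\sigma_{j'}$ factors as $\rho\circ\sigma_j$ with $\rho:\g_j^*\longrightarrow\g_{j'}^*$ the (Poisson) transpose of $\g_{j'}\hookrightarrow\g_j$; hence $\lambda_{j'k'}=h\circ\sigma_j$ for a suitable $h$ on $\g_j^*$. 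Since $\sigma_j$ is a Poisson map and $\nu_{jk}$, being $G_j$-invariant, is a Casimir on $(\g_j^*)_{\text{reg}}$, we obtain $\{\lambda_{jk},\lambda_{j'k'}\}=\{\nu_{jk},h\}\circ\sigma_j=0$ on $\mathcal{U}$.

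For (i) it then remains to assemble these commuting functions into a torus action on $\g^*_{\text{s-reg}}$. Each $\lambda_{jk}$ generates a flow that is either trivial --- this happens precisely when $j=0$, since $\lambda_{0k}$ is then $G$-invariant on $\g^*$ and hence a Casimir --- or $2\pi$-periodic. For the latter, fix $\xi\in\g^*_{\text{s-reg}}$ and set $X\coloneqq\mathrm{d}_{\xi}\lambda_{jk}=\mathrm{d}_{\sigma_j(\xi)}\nu_{jk}\in\g_j$; Proposition \ref{Proposition: Lattice 2} gives $X\in\Lambda_{(\g_j)_{\sigma_j(\xi)}}$, so $\exp(2\pi X)=e$ in $G_j\s G$. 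Because $\sigma_j$ is the $G_j$-moment map on $\g^*$ and the stabilizer $(G_j)_{\sigma_j(\xi)}$ of the regular element $\sigma_j(\xi)$ is a connected maximal torus of $G_j$, the map $\sigma_j$ is constant along $t\mapsto\mathrm{Ad}^*_{\exp(tX)}(\xi)$, so the differential of $\lambda_{jk}$ stays equal to $X$ along this curve, which therefore solves the Hamiltonian equation of $X_{\lambda_{jk}}$; periodicity of $t\mapsto\exp(tX)$ then forces this flow to be $2\pi$-periodic. The flow lines remain in $\g^*_{\text{s-reg}}$ (see the final paragraph), so the flows are complete, and being commuting and periodic they integrate --- after the usual sign adjustment dictated by the moment map convention --- to a $\operatorname{U}(1)^{\mathrm{c}}$-action on $\g^*_{\text{s-reg}}$ with $\operatorname{U}(1)^{\mathrm{c}}$-invariant moment map $\lambda_{\text{big}}\big\vert_{\g^*_{\text{s-reg}}}$; since Hamiltonian flows preserve the Lie--Poisson bivector, this is a Poisson Hamiltonian structure.

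For (ii), the equivariant moment map $\mu$ is Poisson, so the $\lambda_i\circ\mu$ pairwise Poisson-commute on $M_{\text{s-reg}}$. Writing $\lambda_{jk}\circ\mu=\nu_{jk}\circ\mu_j$ with $\mu_j\coloneqq\sigma_j\circ\mu:M\longrightarrow\g_j^*$ the $G_j$-moment map on $M$, the identity $X_{f\circ\mu_j}\big\vert_m=\big(\mathrm{d}_{\mu_j(m)}f\big)_M\big\vert_m$ together with the same equivariance-and-integrality argument as in (i) shows that the flow of $X_{\nu_{jk}\circ\mu_j}$ through any $m\in M_{\text{s-reg}}$ is $t\mapsto\exp(-tX)\cdot m$, where $X\coloneqq\mathrm{d}_{\mu_j(m)}\nu_{jk}$ lies in $\Lambda_{(\g_j)_{\mu_j(m)}}$ by Proposition \ref{Proposition: Lattice 2} (applied to the regular element $\mu_j(m)=\sigma_j(\mu(m))$), so that $\exp(2\pi X)=e$; hence this flow is again $2\pi$-periodic. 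Since $M_{\text{s-reg}}=\mu^{-1}(\g^*_{\text{s-reg}})$ and $\g^*_{\text{s-reg}}$ is preserved by the flows of (i), the set $M_{\text{s-reg}}$ is preserved by these flows, which are therefore complete; assembling them produces the $\operatorname{U}(1)^{\mathrm{c}}$-action with moment map $(\lambda_{\text{big}}\circ\mu)\big\vert_{M_{\text{s-reg}}}$, and this is a Hamiltonian --- i.e.\ symplectic, not merely Poisson --- $\operatorname{U}(1)^{\mathrm{c}}$-space structure because $M$ is symplectic and the flows preserve $\omega$.

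I expect the main obstacle to be exactly the passage from ``pairwise Poisson-commuting smooth functions with pointwise independent differentials'' to an honest, complete, globally defined torus action, i.e.\ ruling out that the $X_{\lambda_{jk}}$ generate merely locally defined or non-closing flows. This is where the integrality statements (Propositions \ref{Proposition: Lattice} and \ref{Proposition: Lattice 2}) are indispensable, and it hinges on the slightly delicate point that $\mathrm{d}_{\xi}\lambda_{jk}$ lies not in $\g_{\xi}$ but in the strictly larger subalgebra $(\g_j)_{\sigma_j(\xi)}$ --- so the flow genuinely moves $\xi$ --- while nevertheless closing up after time $2\pi$ because $\exp(2\pi\,\mathrm{d}_{\xi}\lambda_{jk})=e$ in $G_j$. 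A secondary point needing care is the invariance of $\g^*_{\text{s-reg}}$ (hence of $M_{\text{s-reg}}$) under all these flows even though $\mathcal{U}$ itself need not be $G$-invariant: this holds because each flow preserves every $\sigma_{j'}$ up to a coadjoint motion within a single regular coadjoint orbit and preserves $\lambda_{\text{big}}$, hence preserves the surjectivity of $\mathrm{d}\lambda_{\text{big}}$.
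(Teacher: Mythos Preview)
Your proposal is correct and takes the same approach as the paper: the paper does not give an independent proof but simply records the proposition as a consequence of \cite[Theorem 3.4]{GuilleminSternbergGC}, which is exactly your starting point. Your write-up supplies the details behind that citation --- Poisson-commutativity via the nested projections $\sigma_j$, $2\pi$-periodicity from the integrality statement of Proposition \ref{Proposition: Lattice 2}, and invariance of $\g^*_{\text{s-reg}}$ under the resulting flows --- so it is more explicit than the paper, but not a different route.
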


This result has the following immediate connection to Definition \ref{Definition: Main definition}: the pair $(\lambda_{\text{big}},\g^*_{\text{s-reg}})$ is a Gelfand--Cetlin datum if and only if $\mathrm{c}=\mathrm{b}$, $\g^*_{\text{s-reg}}$ is dense in $\g^*$, and $\lambda_{\text{big}}\big\vert_{\g^*_{\text{s-reg}}}:\g^*_{\text{s-reg}}\longrightarrow\lambda_{\text{big}}(\g^*_{\text{s-reg}})$ is a principal $\mathbb{T}_{\text{int}}$-bundle. Guillemin--Sternberg \cite{GuilleminSternbergGC,GuilleminSternbergThimm} explicitly show these conditions to be achievable for $G=\operatorname{U}(n)$ and $G=\operatorname{SO}(n)$. Our next section outlines the details of the Guillemin--Sternberg construction for $G=\operatorname{U}(n)$. 

\subsection{Example of Gelfand--Cetlin datum: the Gelfand-Cetlin system on $\mathfrak{u}(n)^*$}\label{Subsection: GS}
Fix a positive integer $n$. Consider the Lie group $G\coloneqq\mathrm{U}(n)$ of unitary $n\times n$ matrices, and its Lie algebra $\g\coloneqq\mathfrak{u}(n)$ of skew-Hermitian $n\times n$ matrices. Let us also consider the real $\operatorname{U}(n)$-module $\mathcal{H}(n)$ of Hermitian $n\times n$ matrices. In what follows, we will freely identify $\mathfrak{u}(n)^*$ with $\mathcal{H}(n)$ by means of the non-degenerate, $G$-invariant bilinear form \begin{equation}\label{Equation: Pairing}\mathfrak{u}(n)\otimes_{\mathbb{R}}\mathcal{H}(n)\longrightarrow\mathbb{R},\quad\eta\otimes \xi\mapsto-i\mathrm{tr}(\eta\xi).\end{equation} 

Given an integer $j\in\{0,\ldots,n-1\}$, define the subgroup
$$G_j\coloneqq\left\{\left[\begin{array}{ c | c }
    I_{j} & 0 \\
    \hline
    0 & A
  \end{array}\right]:A\in\operatorname{U}(n-j)\right\}\subset G=\operatorname{U}(n).$$ The descending chain $\operatorname{U}(n)=G=G_0\supset G_1\supset\cdots\supset G_{n-1}$ then induces such a chain $\mathfrak{u}(n)=\g=\g_0\supset\g_1\supset\cdots\supset\g_{n-1}$ on the level of Lie algebras. We have
$$\g_j=\left\{\left[\begin{array}{ c | c }
    0 & 0 \\
    \hline
    0 & x
  \end{array}\right]:x\in\mathfrak{u}(n-j)\right\}\subset\mathfrak{u}(n)\quad\text{and}\quad\g_j^*=\left\{\left[\begin{array}{ c | c }
    0 & 0 \\
    \hline
    0 & \xi
  \end{array}\right]:\xi\in\mathcal{H}(n-j)\right\}\subset\mathcal{H}(n)$$ for all $j\in\{0,\ldots,n-1\}$, where the second equation implicitly uses \eqref{Equation: Pairing}.
The transpose $\sigma_j:\g^*\longrightarrow\g_j^*$ of $\g_j\subset\g$ then sends $\xi\in\g^*=\mathcal{H}(n)$ to the $(n-j)\times(n-j)$ submatrix in the bottom right-hand corner of $\xi$.

Now note that
$$\mathfrak{t}_j\coloneqq\left\{\left[\begin{array}{ c | ccc }
    0 & & 0 & \\
    \hline
     & ia_1 & &\\
    0 & & \ddots & \\
     & & & ia_{n-j}
  \end{array}\right]:a_1,\ldots,a_{n-j}\in\mathbb{R}\right\}\subset\g_j$$
is a Cartan subalgebra for each $j\in\{0,\ldots,n-1\}$. Let $\phi_{jk}\in\mathfrak{t}_j$ be the result of setting $a_k=1$ and $a_p=0$ for $p\neq k$, noting that $\{\phi_{j1},\ldots,\phi_{j(n-j)}\}$ is a $\mathbb{Z}$-basis of $\Lambda_{\mathfrak{t}_j}\subset\mathfrak{t}_j$. At the same time, consider the fundamental Weyl chamber
$$(\mathfrak{t}_j)_{+}\coloneqq\left\{\left[\begin{array}{ c | ccc }
    0 & & 0 & \\
    \hline
     & ia_1 & &\\
    0 & & \ddots & \\
     & & & ia_{n-j}
  \end{array}\right]:\let\scriptstyle\textstyle\substack{a_1,\ldots,a_{n-j}\in\mathbb{R}\\ a_1\geq\cdots\geq a_{n-j}}\right\}\subset\mathfrak{t}_j$$ for each $j\in\{0,\ldots,n-1\}$. Under the pairing \eqref{Equation: Pairing}, $(\mathfrak{t}_j)_{+}$ corresponds to the cone
$$(\mathfrak{t}^*_j)_{+}\coloneqq\left\{\left[\begin{array}{ c | ccc }
    0 & & 0 & \\
    \hline
     & a_1 & &\\
    0 & & \ddots & \\
     & & & a_{n-j}
  \end{array}\right]:\let\scriptstyle\textstyle\substack{a_1,\ldots,a_{n-j}\in\mathbb{R}\\ a_1\geq\cdots\geq a_{n-j}}\right\}\subset\mathcal{H}(n).$$ We then have sweeping maps $\pi_j:\g_j^*\longrightarrow(\mathfrak{t}^*_j)_{+}$ and compositions $\nu_{jk}\coloneqq\phi_{jk}\circ\pi_j:\g_j^*\longrightarrow\mathbb{R}$ for $j\in\{0,\ldots,n-1\}$ and $k\in\{1,\ldots,n-j\}$, as in Section \ref{Subsection: GC torus}. The functions
$$\lambda_{jk}\coloneqq\nu_{jk}\circ\sigma_j:\mathcal{H}(n)\longrightarrow\mathbb{R}$$ from Section \ref{Subsection: GC torus} are therefore given by the following condition: if $\xi\in\mathcal{H}(n)$ and $j\in\{0,\ldots,n-1\}$, then $\lambda_{j1}(\xi)\geq\lambda_{j2}(\xi)\geq\cdots\geq\lambda_{j(n-j)}(\xi)$ are the eigenvalues of the $(n-j)\times(n-j)$ submatrix in the bottom right-hand corner of $\xi$. 

Observe that the number of maps $\lambda_{jk}$ is $$n+(n-1)+\cdots+1=\frac{n(n+1)}{2}=\frac{1}{2}(\dim\mathfrak{u}(n)+n).$$ Our enumeration \eqref{Equation: Enumeration} therefore takes the form \begin{align*}\lambda_{\text{big}} & \coloneqq(\lambda_1,\ldots,\lambda_{\mathrm{\frac{n(n+1)}{2}}}) \\ & \coloneqq(\lambda_{01},\ldots,\lambda_{0n},\lambda_{11},\ldots,\lambda_{1(n-1)},\ldots,\lambda_{(n-2)1},\lambda_{(n-2)2},\lambda_{(n-1)1}):\mathcal{H}(n)\longrightarrow\mathbb{R}^{\frac{n(n+1)}{2}}.\end{align*} Let us also consider the open dense subset $$\mathcal{H}(n)_{\text{s-reg}}\coloneqq\left\{\xi\in\mathcal{H}(n):\let\scriptstyle\textstyle\substack{\lambda_{j1}(\xi)>\cdots>\lambda_{j(n-j)}(\xi)\text{ for all }j\in\{0,\ldots,n-1\}\\ \lambda_{0k}(\xi)>\cdots>\lambda_{(n-k)k}(\xi)\text{ for all }k\in\{1,\ldots,n\}}\right\}$$ of $\g^*=\mathcal{H}(n)$. By the paragraph following the proof of Proposition \ref{Proposition: Poisson moment map}, $(\lambda_{\text{big}},\mathcal{H}(n)_{\text{s-reg}})$ is a Gelfand--Cetlin datum if and only if $\lambda_{\text{big}}\big\vert_{\mathcal{H}(n)_{\text{s-reg}}}:\mathcal{H}(n)_{\text{s-reg}}\longrightarrow\lambda_{\text{big}}(\mathcal{H}(n)_{\text{s-reg}})$ is a principal bundle for $\mathbb{T}_{\text{int}}=\operatorname{U}(1)^{\frac{n(n-1)}{2}}$. This later condition is verified in \cite[Section 5]{GuilleminSternbergGC}.

\section{The abelianization theorem}\label{Section: The abelianization theorem}
This section is devoted to the proof of our abelianization theorem for smooth quotients. Some preliminary results are established in \ref{Subsection: Universal} and \ref{Subsection: Some supplementary results}, while the main proof appears in \ref{Subsection: Proof for smooth}.

\subsection{The universal maximal torus}\label{Subsection: Universal} 
Adopt the notation and conventions in Section \ref{Subsection: Definition}, and let $(\lambda_{\text{big}},\g^*_{\text{s-reg}})$ be a Gelfand--Cetlin datum. Given any $\xi\in\g^*_{\text{reg}}$, Definition \ref{Definition: Main definition}(ii) tells us that $\{\mathrm{d}_{\xi}\lambda_1,\ldots,\mathrm{d}_{\xi}\lambda_{\ell}\}$ is a basis of $\g_{\xi}$. This basis determines a vector space isomorphism $$\kappa_{\xi}:\g_{\xi}\overset{\cong}\longrightarrow\mathbb{R}^{\ell},\quad x_1\mathrm{d}_{\xi}\lambda_1+\cdots+x_{\ell}\mathrm{d}_{\xi}\lambda_{\ell}\mapsto (x_1,\ldots,x_{\ell}).$$ The torus $\mathbb{T}_{\text{small}}$ is then a universal maximal torus in the following sense.

\begin{proposition}\label{Proposition: Universal maximal torus}
If $\xi\in\g^*_{\emph{reg}}$, then $\kappa_{\xi}$ integrates to a Lie group isomorphism $\tau_{\xi}:G_{\xi}\overset{\cong}\longrightarrow\mathbb{T}_{\emph{small}}$.
\end{proposition}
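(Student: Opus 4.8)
The plan is to present both $G_{\xi}$ and $\mathbb{T}_{\text{small}}$ as tori of the form ``Lie algebra modulo $2\pi$ times an integral lattice'' and to check that the linear isomorphism $\kappa_{\xi}$ identifies the two lattices. Since $\xi\in\g^*_{\text{reg}}$, the discussion in Section~\ref{Subsection: Lie-theoretic preliminaries} shows that $G_{\xi}$ is a maximal torus of $G$ with Lie algebra the Cartan subalgebra $\g_{\xi}$; in particular $G_{\xi}$ is connected. Applying the remarks on maximal tori from that section with $\mathfrak{t}=\g_{\xi}$ and $T=G_{\xi}$, the exponential map restricts to a surjective homomorphism $\mathrm{exp}\big\vert_{\g_{\xi}}:\g_{\xi}\longrightarrow G_{\xi}$ with kernel $2\pi\Lambda_{\g_{\xi}}$, and hence induces an isomorphism $\g_{\xi}/2\pi\Lambda_{\g_{\xi}}\overset{\cong}{\longrightarrow}G_{\xi}$. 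On the other side, the exponential map $\mathbb{R}^{\ell}\longrightarrow\operatorname{U}(1)^{\ell}$ recalled in Section~\ref{Section: Background and conventions} is a surjective homomorphism with kernel $2\pi\mathbb{Z}^{\ell}$, giving an isomorphism $\mathbb{R}^{\ell}/2\pi\mathbb{Z}^{\ell}\overset{\cong}{\longrightarrow}\mathbb{T}_{\text{small}}$.

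Next I would compare the lattices through $\kappa_{\xi}$. By construction, $\kappa_{\xi}$ carries $\mathrm{d}_{\xi}\lambda_k$ to the $k$-th standard basis vector $e_k\in\mathbb{R}^{\ell}$ for each $k\in\{1,\ldots,\ell\}$. Definition~\ref{Definition: Main definition}(ii) asserts that $\{\mathrm{d}_{\xi}\lambda_1,\ldots,\mathrm{d}_{\xi}\lambda_{\ell}\}$ is a $\mathbb{Z}$-basis of $\Lambda_{\g_{\xi}}$, and $\{e_1,\ldots,e_{\ell}\}$ is a $\mathbb{Z}$-basis of $\mathbb{Z}^{\ell}$, so $\kappa_{\xi}$ restricts to a $\mathbb{Z}$-module isomorphism $\Lambda_{\g_{\xi}}\overset{\cong}{\longrightarrow}\mathbb{Z}^{\ell}$ and therefore $\kappa_{\xi}(2\pi\Lambda_{\g_{\xi}})=2\pi\mathbb{Z}^{\ell}$. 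Hence $\kappa_{\xi}$ descends to a group isomorphism $\g_{\xi}/2\pi\Lambda_{\g_{\xi}}\overset{\cong}{\longrightarrow}\mathbb{R}^{\ell}/2\pi\mathbb{Z}^{\ell}$; composing with the two identifications of the previous paragraph produces a group isomorphism $\tau_{\xi}:G_{\xi}\overset{\cong}{\longrightarrow}\mathbb{T}_{\text{small}}$ fitting into the commutative square
\[
\begin{tikzcd}[row sep=large,column sep=large]
\g_{\xi}\arrow[r,"\kappa_{\xi}"]\arrow[d,swap,"\mathrm{exp}\big\vert_{\g_{\xi}}"] & \mathbb{R}^{\ell}\arrow[d] \\
G_{\xi}\arrow[r,"\tau_{\xi}"] & \mathbb{T}_{\text{small}}.
\end{tikzcd}
\]
Because the vertical maps are surjective local diffeomorphisms and $\kappa_{\xi}$ is linear, $\tau_{\xi}$ and $\tau_{\xi}^{-1}$ are smooth, so $\tau_{\xi}$ is a Lie group isomorphism; differentiating the square at the identity gives $\mathrm{d}_e\tau_{\xi}=\kappa_{\xi}$, which is exactly the statement that $\tau_{\xi}$ integrates $\kappa_{\xi}$.

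I do not anticipate a genuine obstacle here: the entire argument reduces to the lattice bookkeeping of the second paragraph. The one point deserving care is that $\kappa_{\xi}$ must match the scaled lattices \emph{exactly}, not merely up to finite index, so that the descent is an isomorphism rather than an isogeny; this is guaranteed precisely because Definition~\ref{Definition: Main definition}(ii) supplies an honest $\mathbb{Z}$-basis of $\Lambda_{\g_{\xi}}$, not just a rational basis or a finite-index sublattice. It is also worth recording explicitly that $G_{\xi}$ is connected, since this is what makes $\mathrm{exp}\big\vert_{\g_{\xi}}$ surjective, and it is part of the assertion that $G_{\xi}$ is a maximal torus.
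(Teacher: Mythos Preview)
Your proposal is correct and follows essentially the same approach as the paper: both reduce the claim to showing that $\kappa_{\xi}$ carries the kernel $2\pi\Lambda_{\g_{\xi}}$ of $\mathrm{exp}\big\vert_{\g_{\xi}}$ onto $(2\pi\mathbb{Z})^{\ell}$, which is exactly the lattice matching guaranteed by Definition~\ref{Definition: Main definition}(ii). Your write-up is simply more explicit about the quotient presentations and about smoothness; the paper condenses all of this to two sentences and invokes Proposition~\ref{Proposition: Lattice} (whose content coincides with Definition~\ref{Definition: Main definition}(ii)).
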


\begin{proof}
It suffices to prove that $\kappa_{\xi}$ restricts to a $\mathbb{Z}$-module isomorphism from the kernel of $\exp\big\vert_{\g_{\xi}}:\g_{\xi}\longrightarrow G_{\xi}$ to $(2\pi\mathbb{Z})^{\ell}\subset\mathbb{R}^{\ell}$. This is an immediate consequence of Proposition \ref{Proposition: Lattice}.
\end{proof}

\begin{proposition}\label{Proposition: Alternative action description}
Let $M$ be a Hamiltonian $G$-space with moment map $\mu:M\longrightarrow\g^*$. Suppose that $\xi\in\g^*_{\emph{s-reg}}$. We then have $g\cdot m=\tau_{\xi}(g)\cdot m$ for all $g\in G_{\xi}$ and $m\in\mu^{-1}(\xi)$, where the left and right-hand sides denote the actions of $G_{\xi}\subset G$ on $M$ and $\mathbb{T}_{\emph{small}}\subset\mathbb{T}_{\emph{big}}$ on $M_{\emph{s-reg}}$, respectively.
\end{proposition}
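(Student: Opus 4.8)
The plan is to prove the asserted identity first at the level of Lie algebra actions along $\mu^{-1}(\xi)$, and then to integrate, exploiting that $G_\xi$ is a torus. The starting point is a composition rule for Hamiltonian vector fields through a moment map: for any $m\in M$ and any function $f$ defined and smooth near $\mu(m)$ one has $X_{f\circ\mu}(m)=\big(\mathrm{d}_{\mu(m)}f\big)_M(m)$, the value at $m$ of the generating vector field (for the $G$-action on $M$) of $\mathrm{d}_{\mu(m)}f\in(\g^*)^*=\g$. This is immediate: $f\circ\mu$ and $\mu^{\eta}$ with $\eta\coloneqq\mathrm{d}_{\mu(m)}f$ have the same differential at $m$, hence the same Hamiltonian vector field there, and $X_{\mu^{\eta}}=\eta_M$ by the definition of a moment map.

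Next I would compare the two infinitesimal actions along $\mu^{-1}(\xi)$. Since $\xi\in\g^*_{\text{s-reg}}$ we have $\mu^{-1}(\xi)\subset M_{\text{s-reg}}$, and the moment map for the subtorus $\mathbb{T}_{\text{small}}\subset\mathbb{T}_{\text{big}}$ acting on $M_{\text{s-reg}}$ is $(\lambda_1\circ\mu,\ldots,\lambda_{\ell}\circ\mu)$; hence the generating vector field of the $k$-th standard basis vector of $\mathbb{R}_{\text{small}}$ is $X_{\lambda_k\circ\mu}$, which at a point $m\in\mu^{-1}(\xi)$ equals $(\mathrm{d}_{\xi}\lambda_k)_M(m)$ by the composition rule. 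As $\kappa_{\xi}$ sends $\mathrm{d}_{\xi}\lambda_k$ to this $k$-th basis vector, linearity shows that for every $x\in\g_{\xi}$ the $\g_{\xi}$-generating vector field $x_M$ and the $\mathbb{R}_{\text{small}}$-generating vector field of $\kappa_{\xi}(x)$ agree at every point of $\mu^{-1}(\xi)$. I would also observe that $\mu^{-1}(\xi)$ is stable under both the $G_{\xi}$-action (by equivariance of $\mu$) and the $\mathbb{T}_{\text{small}}$-action: each $\lambda_k$ with $k\le\ell$ is $G$-invariant, so $\mathrm{d}_{\zeta}\lambda_k\in\g_{\zeta}$ for all $\zeta\in\g^*_{\text{reg}}$ by Proposition \ref{Proposition: Lattice}, and a short computation using the equivariance of $\mu$ then shows that $\mu$ is constant along the flow of each $X_{\lambda_k\circ\mu}$, $k\le\ell$; connectedness of $\mathbb{T}_{\text{small}}$ finishes the point.

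Finally I would integrate. Fix $m\in\mu^{-1}(\xi)$ and $x\in\g_{\xi}$. The curve $t\mapsto\exp(-tx)\cdot m$ is an integral curve of $x_M$ that stays in $\mu^{-1}(\xi)$; by the previous paragraph it is therefore also an integral curve of the vector field on $M_{\text{s-reg}}$ generated by $\kappa_{\xi}(x)$, so by uniqueness of integral curves it coincides with $t\mapsto\exp(-t\kappa_{\xi}(x))\cdot m$. Evaluating at $t=-1$ and using $\tau_{\xi}\circ\exp=\exp\circ\kappa_{\xi}$ (Proposition \ref{Proposition: Universal maximal torus}) gives $\exp(x)\cdot m=\tau_{\xi}(\exp(x))\cdot m$. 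Since $\xi\in\g^*_{\text{reg}}$, the stabilizer $G_{\xi}$ is a torus, so $\exp\colon\g_{\xi}\longrightarrow G_{\xi}$ is surjective, and the identity $g\cdot m=\tau_{\xi}(g)\cdot m$ follows for all $g\in G_{\xi}$. The main obstacle is precisely this last passage from the infinitesimal equality to the group equality: it hinges on $\mu^{-1}(\xi)$ being invariant under the torus action (which in turn uses the $G$-invariance of $\lambda_1,\ldots,\lambda_{\ell}$) together with the surjectivity of $\exp$ on the torus $G_{\xi}$; without the torus structure one would instead have to argue via products of exponentials.
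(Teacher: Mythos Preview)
Your proof is correct and follows essentially the same approach as the paper: both reduce the statement to showing that the generating vector fields of $\g_{\xi}$ and $\mathbb{R}_{\text{small}}$ agree along $\mu^{-1}(\xi)$, via the chain rule identity $\mathrm{d}_m(\lambda_k\circ\mu)=\mathrm{d}_{\xi}\lambda_k\circ\mathrm{d}_m\mu=\mathrm{d}_m\mu^{\gamma_k}$ for $\gamma_k=\mathrm{d}_{\xi}\lambda_k$. Your write-up is in fact more complete: the paper simply asserts that this infinitesimal equality ``suffices'' and stops, whereas you spell out the integration step (invariance of $\mu^{-1}(\xi)$ under the $G_{\xi}$-flow, uniqueness of integral curves, and surjectivity of $\exp$ on the torus $G_{\xi}$). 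Two minor remarks: the fact $\mathrm{d}_{\zeta}\lambda_k\in\g_{\zeta}$ is really Condition~(ii) of Definition~\ref{Definition: Main definition} rather than Proposition~\ref{Proposition: Lattice}; and your paragraph proving $\mathbb{T}_{\text{small}}$-invariance of $\mu^{-1}(\xi)$, while true, is not actually needed for the integration argument, since you only use that the $G_{\xi}$-orbit stays in the level set.
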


\begin{proof}
Let $X_{\zeta}$ be the generating vector field on $M_{\text{s-reg}}$ determined by $\zeta\in\mathbb{R}_{\text{small}}$ via the action of $\mathbb{T}_{\text{small}}$ on $M_{\text{s-reg}}$. Write $Y_{\eta}$ for the generating vector field on $M$ determined by $\eta\in\g_{\xi}$ through the action of $G_{\xi}\subset G$ on $M$. It suffices to prove that $(X_{\kappa_{\xi}(\eta)})_m=(Y_{\eta})_m$ for all $\eta\in\g_{\xi}$ and $m\in\mu^{-1}(\xi)$. Setting $\gamma_j\coloneqq d_{\xi}\lambda_j$ and letting $e_j\in\mathbb{R}^{\ell}=\mathbb{R}_{\text{small}}$ denote the $j^{\text{th}}$ standard basis vector, this is equivalent to establishing that $(X_{e_j})_m=(Y_{\gamma_j})_m$ for all $j\in\{1,\ldots,\ell\}$ and $m\in\mu^{-1}(\xi)$. On the other hand, $Y_{\gamma_j}$ (resp. $X_{e_j}$) is the Hamiltonian vector field on $M$ (resp. $M_{\text{s-reg}}$) associated to $\mu^{\gamma_j}$ (resp. the $j^{\text{th}}$ component $\mu^*\lambda_j$ of $\lambda\circ\mu$). This further reduces us to proving that $\mathrm{d}_m\mu^{\gamma_j}=\mathrm{d}_m\mu^*\lambda_j$. But it is clear that
$$\mathrm{d}_m\mu^*\lambda_j=\mathrm{d}_\xi\lambda_j\circ\mathrm{d}_m\mu=\mathrm{d}_m\mu^{\gamma_j}$$ for all $m\in\mu^{-1}(\xi)$ and $j\in\{1,\ldots,\ell\}$.
\end{proof}

\subsection{Some supplementary results}\label{Subsection: Some supplementary results}
We now prove two supplementary facts needed to establish the main result of this paper. We continue with the notation and conventions of Sections \ref{Subsection: Definition} and \ref{Subsection: Universal}.

\begin{proposition}\label{Proposition: Two parts}
Let $M$ be a Hamiltonian $G$-space with moment map $\mu:M\longrightarrow\g^*$. Suppose that $\xi\in\g^*_{\emph{s-reg}}$.
\begin{itemize}
\item[\textup{(i)}] If $m\in\mu^{-1}(\xi)$ and $t\in\mathbb{T}_{\emph{int}}$ satisfy $t\cdot m\in\mu^{-1}(\xi)$, then $t=e$.
\item[\textup{(ii)}] The saturation of $\mu^{-1}(\xi)$ under the action of $\mathbb{T}_{\emph{int}}$ on $M_{\emph{s-reg}}$ is $\lambda_M^{-1}(\lambda_{\emph{big}}(\xi))$.
\end{itemize}
\end{proposition}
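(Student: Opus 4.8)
The plan is to treat the two parts separately, exploiting the moment-map property of $\lambda_M$ on $M_{\text{s-reg}}$ and the bundle structure from Definition \ref{Definition: Main definition}(v). For part (i), suppose $m\in\mu^{-1}(\xi)$ and $t\in\mathbb{T}_{\text{int}}$ with $t\cdot m\in\mu^{-1}(\xi)$. First I would record the compatibility $\lambda_{\text{big}}\circ\mu$ is $\mathbb{T}_{\text{big}}$-invariant in the appropriate sense — more precisely, $\lambda_M$ is a moment map, so its fibers are $\mathbb{T}_{\text{big}}$-invariant; hence $\lambda_M(t\cdot m)=\lambda_M(m)$, i.e. $\lambda_{\text{big}}(\mu(t\cdot m))=\lambda_{\text{big}}(\mu(m))=\lambda_{\text{big}}(\xi)$. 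Since $\mu(t\cdot m)=\xi=\mu(m)$ by hypothesis, both $m$ and $t\cdot m$ lie in the same fiber of $\lambda_M$, but more is true: both lie over the \emph{same point} $\xi\in\g^*_{\text{s-reg}}$. Now consider the map $\lambda_{\text{big}}\vert_{\g^*_{\text{s-reg}}}$, which by Definition \ref{Definition: Main definition}(v) is a principal $\mathbb{T}_{\text{int}}$-bundle over its image. The key observation is that $\mathbb{T}_{\text{int}}$ acts on $\g^*_{\text{s-reg}}$ (via its inclusion in $\mathbb{T}_{\text{big}}$) with orbits contained in the fibers of $\lambda_{\text{big}}$, and since this action is that of the structure group of a principal bundle, it is free on each fiber. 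The point $\xi$ itself lies over $\lambda_{\text{big}}(\xi)$, and the $\mathbb{T}_{\text{int}}$-action on $\g^*_{\text{s-reg}}$ is the coadjoint action restricted appropriately; but here is the subtlety — I actually want freeness of $\mathbb{T}_{\text{int}}$ on $M$, not on $\g^*$. The clean route: apply the moment-map/equivariance argument directly. Since $\mu$ is $G$-equivariant and $\mathbb{T}_{\text{int}}$ acts on $M_{\text{s-reg}}$ through Hamiltonian vector fields that project (under $\mathrm{d}\mu$) to the Hamiltonian vector fields of $\lambda_{\ell+1},\ldots,\lambda_{\mathrm b}$ on $\g^*_{\text{s-reg}}$, the $\mathbb{T}_{\text{int}}$-orbit of $m$ maps under $\mu$ into the $\mathbb{T}_{\text{int}}$-orbit of $\xi=\mu(m)$ in $\g^*_{\text{s-reg}}$. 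So $t\cdot m\in\mu^{-1}(\xi)$ forces $\mu(t\cdot m)=\xi$ to lie in the $\mathbb{T}_{\text{int}}$-orbit of $\xi$ at the parameter $t$; since $\lambda_{\text{big}}\vert_{\g^*_{\text{s-reg}}}$ is a principal $\mathbb{T}_{\text{int}}$-bundle and $\lambda_{\text{big}}(t\cdot\xi)=\lambda_{\text{big}}(\xi)$ while $t\cdot\xi=\xi$, freeness of the structure-group action gives $t=e$.

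For part (ii), write $N$ for the $\mathbb{T}_{\text{int}}$-saturation of $\mu^{-1}(\xi)$ in $M_{\text{s-reg}}$; I must show $N=\lambda_M^{-1}(\lambda_{\text{big}}(\xi))$. The inclusion $N\subseteq\lambda_M^{-1}(\lambda_{\text{big}}(\xi))$ is immediate: if $m\in\mu^{-1}(\xi)$ then $\lambda_M(m)=\lambda_{\text{big}}(\xi)$, and since $\lambda_M$ is $\mathbb{T}_{\text{int}}$-invariant (being a $\mathbb{T}_{\text{big}}$-moment map, its components Poisson-commute with the Hamiltonians generating $\mathbb{T}_{\text{int}}$, so it is constant along $\mathbb{T}_{\text{int}}$-orbits), every point $t\cdot m$ of the saturation satisfies $\lambda_M(t\cdot m)=\lambda_{\text{big}}(\xi)$. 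For the reverse inclusion, take $m'\in M_{\text{s-reg}}$ with $\lambda_M(m')=\lambda_{\text{big}}(\xi)$, i.e. $\lambda_{\text{big}}(\mu(m'))=\lambda_{\text{big}}(\xi)$; I want to produce $t\in\mathbb{T}_{\text{int}}$ with $t\cdot m'\in\mu^{-1}(\xi)$. Since $\mu(m')\in\g^*_{\text{s-reg}}$ and $\lambda_{\text{big}}\vert_{\g^*_{\text{s-reg}}}$ is a principal $\mathbb{T}_{\text{int}}$-bundle, $\mu(m')$ and $\xi$ lie in the same fiber over $\lambda_{\text{big}}(\xi)$, so there is a \emph{unique} $t\in\mathbb{T}_{\text{int}}$ with $t\cdot\mu(m')=\xi$ (acting by the coadjoint action, which on $\g^*_{\text{s-reg}}$ agrees with the $\mathbb{T}_{\text{int}}$-action of the datum). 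Then by $\mu$-equivariance for the $\mathbb{T}_{\text{int}}$-action on $M_{\text{s-reg}}$ — which holds because this action is generated by Hamiltonians that are pullbacks along $\mu$ of functions on $\g^*_{\text{s-reg}}$, so $\mu$ intertwines the two $\mathbb{T}_{\text{int}}$-actions — we get $\mu(t\cdot m')=t\cdot\mu(m')=\xi$, i.e. $t\cdot m'\in\mu^{-1}(\xi)$, so $m'=t^{-1}\cdot(t\cdot m')\in N$.

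The main obstacle, and the point requiring the most care, is establishing rigorously that $\mu:M_{\text{s-reg}}\to\g^*_{\text{s-reg}}$ is equivariant for the respective $\mathbb{T}_{\text{int}}$-actions, and that the $\mathbb{T}_{\text{int}}$-action induced on $\g^*_{\text{s-reg}}$ really is the structure-group action of the principal bundle in Definition \ref{Definition: Main definition}(v). Both follow from the moment-map formalism: the action of $\zeta\in\mathbb{R}_{\text{int}}$ on $M_{\text{s-reg}}$ is generated by the Hamiltonian vector field of $\langle\lambda_{\text{big}}\circ\mu,\zeta\rangle=\langle\lambda_{\text{big}},\zeta\rangle\circ\mu$, whose flow is $\mu$-related to the flow of the Hamiltonian vector field of $\langle\lambda_{\text{big}},\zeta\rangle$ on $\g^*_{\text{s-reg}}$ (a general fact about moment maps: if $\mu$ is a moment map then $\mathrm{d}\mu\circ X_{f\circ\mu}=X_f\circ\mu$ as sections along $\mu$, which is precisely the computation carried out in the proof of Proposition \ref{Proposition: Alternative action description}). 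Integrating, $\mu$ intertwines the $\mathbb{T}_{\text{int}}$-actions; and since $\lambda_{\text{big}}\vert_{\g^*_{\text{s-reg}}}$ is the moment map for the $\mathbb{T}_{\text{big}}$-structure on $\g^*_{\text{s-reg}}$, the induced $\mathbb{T}_{\text{int}}$-action there is exactly the one making $\lambda_{\text{big}}\vert_{\g^*_{\text{s-reg}}}$ the asserted principal bundle. Once these identifications are in place, the two parts are quick consequences of freeness and transitivity of the structure-group action on fibers.
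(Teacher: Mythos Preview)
Your proposal is correct and follows essentially the same route as the paper: for (i) you push the relation $t\cdot m\in\mu^{-1}(\xi)$ down via the $\mathbb{T}_{\text{int}}$-equivariance of $\mu\vert_{M_{\text{s-reg}}}$ to get $t\cdot\xi=\xi$ and then invoke freeness of the structure-group action from Definition~\ref{Definition: Main definition}(v), and for (ii) you use $\mathbb{T}_{\text{int}}$-invariance of $\lambda_M$ for one inclusion and transitivity on fibers plus equivariance of $\mu$ for the other. The only difference is that you spell out in detail why $\mu:M_{\text{s-reg}}\to\g^*_{\text{s-reg}}$ is $\mathbb{T}_{\text{int}}$-equivariant, which the paper simply asserts.
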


\begin{proof}
To verify (i), let $m\in\mu^{-1}(\xi)$ and $t\in\mathbb{T}_{\text{int}}$ be such that $t\cdot m\in\mu^{-1}(\xi)$. Let us also observe that $\mu$ is $\mathbb{T}_{\text{int}}$-equivariant when restricted to a map $M_{\text{s-reg}}\longrightarrow\g^*_{\text{s-reg}}$. These last two sentences imply that $\xi=t\cdot\xi$. Since $\mathbb{T}_{\text{int}}$ acts freely on $\g^*_{\text{s-reg}}$, we must have $t=e$.

We now verify (ii). To this end, note that $\lambda_M^{-1}(\lambda_{\text{big}}(\xi))$ is a $\mathbb{T}_{\text{int}}$-invariant subset of $M_{\text{s-reg}}$ that contains $\mu^{-1}(\xi)$. This implies that the saturation of $\mu^{-1}(\xi)$ is contained in $\lambda_M^{-1}(\lambda_{\text{big}}(\xi))$. For the opposite inclusion, suppose that $m\in\lambda_M^{-1}(\lambda_{\text{big}}(\xi))$. Definition \ref{Definition: Main definition}(v) tells us that $t\cdot\mu(m)=\xi$ for some $t\in\mathbb{T}_{\text{int}}$. By the equivariance property of $\mu$ mentioned in the previous paragraph, we must have $t\cdot m\in\mu^{-1}(\xi)$. This completes the proof of (ii). 
\end{proof}

Fix $\xi\in\g^*_{\text{s-reg}}$. In light of the previous proposition, we may define the map
$$\delta_{\xi}:\mu^{-1}(\xi)\times\mathbb{T}_{\text{int}}\longrightarrow\lambda_M^{-1}(\lambda_{\text{big}}(\xi)),\quad (m,t)\mapsto t\cdot m.$$ The following result is immediate consequence of the previous proposition.

\begin{corollary}\label{Corollary: Homeomorphism}
If $\xi\in\g^*_{\emph{s-reg}}$, then $\delta_{\xi}$ is a homeomorphism.
\end{corollary}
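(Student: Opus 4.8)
The plan is to show that $\delta_{\xi}$ is a continuous bijection with continuous inverse, exploiting Proposition \ref{Proposition: Two parts} for the bijectivity and the local triviality of the principal $\mathbb{T}_{\text{int}}$-bundle structure (Definition \ref{Definition: Main definition}(v)) to produce a continuous inverse.

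First I would record continuity of $\delta_{\xi}$: it is the restriction of the action map $\mathbb{T}_{\text{int}}\times M_{\text{s-reg}}\longrightarrow M_{\text{s-reg}}$ (with the factors reordered), which is continuous since the $\mathbb{T}_{\text{big}}$-action on $M_{\text{s-reg}}$ is smooth. Next, bijectivity: surjectivity is exactly Proposition \ref{Proposition: Two parts}(ii), which says every point of $\lambda_M^{-1}(\lambda_{\text{big}}(\xi))$ lies in the $\mathbb{T}_{\text{int}}$-orbit of some point of $\mu^{-1}(\xi)$, i.e. is of the form $t\cdot m$. For injectivity, suppose $t_1\cdot m_1 = t_2\cdot m_2$ with $m_i\in\mu^{-1}(\xi)$; then $(t_2^{-1}t_1)\cdot m_1 = m_2\in\mu^{-1}(\xi)$, so Proposition \ref{Proposition: Two parts}(i) forces $t_2^{-1}t_1 = e$, hence $t_1=t_2$ and then $m_1=m_2$.

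The remaining point, and the one requiring the most care, is continuity of $\delta_{\xi}^{-1}$. Here I would use Definition \ref{Definition: Main definition}(v): $\lambda_{\text{big}}\big\vert_{\g^*_{\text{s-reg}}}:\g^*_{\text{s-reg}}\longrightarrow\lambda_{\text{big}}(\g^*_{\text{s-reg}})$ is a principal $\mathbb{T}_{\text{int}}$-bundle, so near $\xi$ there is a local section, equivalently a continuous ($\mathbb{T}_{\text{int}}$-equivariant) local trivialization $\Phi$ of this bundle over a neighborhood $V$ of $\lambda_{\text{big}}(\xi)$ in the base; composing with $\mu$, the restriction $\mu\big\vert: (\lambda_M)^{-1}(V)\longrightarrow (\lambda_{\text{big}}\big\vert)^{-1}(V)$ is continuous and $\mathbb{T}_{\text{int}}$-equivariant (as noted in the proof of Proposition \ref{Proposition: Two parts}). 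Given a point $p\in\lambda_M^{-1}(\lambda_{\text{big}}(\xi))$, the trivialization $\Phi$ applied to $\mu(p)$ extracts a continuously-varying group element $\theta(p)\in\mathbb{T}_{\text{int}}$ with $\theta(p)^{-1}\cdot\mu(p)=\xi$, and one checks from injectivity above that $\theta(p)$ is precisely the $\mathbb{T}_{\text{int}}$-component of $\delta_{\xi}^{-1}(p)$; then $\delta_{\xi}^{-1}(p)=\big(\theta(p)^{-1}\cdot p,\ \theta(p)\big)$, which is manifestly continuous in $p$. The main obstacle is simply being careful that the local section of the principal bundle on $\g^*_{\text{s-reg}}$ pulls back, via the equivariant map $\mu$, to give the required continuous group-valued function on the level set in $M_{\text{s-reg}}$; once that is set up, everything else is formal. (Alternatively, if one prefers to avoid an explicit trivialization: $\delta_{\xi}$ is a continuous bijection, and the two factors $\mu^{-1}(\xi)$ and $\mathbb{T}_{\text{int}}$ are, respectively, closed in $M_{\text{s-reg}}$ and compact, which — together with the fact that $\lambda_M^{-1}(\lambda_{\text{big}}(\xi))$ is metrizable, being a subspace of a manifold — lets one invoke a standard point-set argument, but the trivialization argument above is cleaner and more self-contained.)
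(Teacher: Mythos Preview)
Your argument is correct and supplies the details the paper omits: the paper simply records the corollary as ``an immediate consequence'' of Proposition~\ref{Proposition: Two parts}, leaving continuity of the inverse to the reader. One small simplification: since every $p\in\lambda_M^{-1}(\lambda_{\text{big}}(\xi))$ satisfies $\lambda_{\text{big}}(\mu(p))=\lambda_{\text{big}}(\xi)$, you only ever need the single fibre $\mathbb{T}_{\text{int}}\cdot\xi\subset\g^*_{\text{s-reg}}$, so rather than invoking a local trivialization over a neighbourhood $V$ it suffices to note that the orbit map $\mathbb{T}_{\text{int}}\to\mathbb{T}_{\text{int}}\cdot\xi$, $t\mapsto t\cdot\xi$, is a homeomorphism (a continuous bijection from a compact space to a Hausdorff space), and then your $\theta$ is just $\mu$ followed by the inverse of this orbit map.
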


\subsection{Proof of the abelianization theorem}\label{Subsection: Proof for smooth}
Let us continue with the notation and conventions set in Sections \ref{Subsection: Definition}, \ref{Subsection: Universal}, and \ref{Subsection: Some supplementary results}. 

\begin{theorem}\label{Theorem: Free case}
Let $M$ be a Hamiltonian $G$-space with moment map $\mu:M\longrightarrow\g^*$. Suppose that $\xi\in\g^*_{\emph{s-reg}}$.
\begin{itemize}
\item[\textup{(i)}] The stabilizer $G_{\xi}$ acts freely on $\mu^{-1}(\xi)$ if and only if $\mathbb{T}_{\emph{big}}$ acts freely on $\lambda_M^{-1}(\lambda_{\emph{big}}(\xi))$.
\item[\textup{(ii)}]  In the case of \emph{(i)}, there is a canonical symplectomorphism $M\sll{\xi} G\cong M_{\emph{s-reg}}\sll{\lambda_{\text{big}}(\xi)}\mathbb{T}_{\emph{big}}$.
\end{itemize}
\end{theorem}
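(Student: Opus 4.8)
The plan is to deduce both parts from Propositions~\ref{Proposition: Universal maximal torus}, \ref{Proposition: Alternative action description}, \ref{Proposition: Two parts} and Corollary~\ref{Corollary: Homeomorphism}, working throughout with the decomposition $\mathbb{T}_{\text{big}}=\mathbb{T}_{\text{small}}\times\mathbb{T}_{\text{int}}$. The one additional fact I would record first is that $\mathbb{T}_{\text{small}}$ acts trivially on $\g^*_{\text{s-reg}}$: by Definition~\ref{Definition: Main definition}(iv) the infinitesimal $\mathbb{R}_{\text{small}}$-action on $\g^*_{\text{s-reg}}$ is generated by the Hamiltonian vector fields of $\lambda_1,\dots,\lambda_{\ell}$, and each of these vanishes on $\g^*_{\text{reg}}$ because the differential at $\xi$ of a $G$-invariant function lies in $\g_{\xi}$. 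Consequently $\mu$ restricts to a $\mathbb{T}_{\text{big}}$-equivariant map $M_{\text{s-reg}}\longrightarrow\g^*_{\text{s-reg}}$ on which the subtorus $\mathbb{T}_{\text{small}}$ acts trivially on the target; in particular $\mu(s\cdot m)=\xi$ and $\mu(t\cdot m)=t\cdot\xi$ whenever $m\in\mu^{-1}(\xi)$, $s\in\mathbb{T}_{\text{small}}$, $t\in\mathbb{T}_{\text{int}}$.

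For part (i), in the forward direction take $p\in\lambda_M^{-1}(\lambda_{\text{big}}(\xi))$ and use Proposition~\ref{Proposition: Two parts}(ii) to write $p=t_0\cdot m$ with $m\in\mu^{-1}(\xi)$ and $t_0\in\mathbb{T}_{\text{int}}$; if $(s,t)\in\mathbb{T}_{\text{small}}\times\mathbb{T}_{\text{int}}$ fixes $p$, then since the action is that of a group one may cancel $t_0$ to get $(s,t)\cdot m=m$. Applying $\mu$ and using the observations of the previous paragraph gives $t\cdot\xi=\xi$, hence $t=e$ because $\mathbb{T}_{\text{int}}$ acts freely on $\g^*_{\text{s-reg}}$ (Definition~\ref{Definition: Main definition}(v)); then $s\cdot m=\tau_{\xi}^{-1}(s)\cdot m=m$ forces $s=e$ by freeness of $G_{\xi}$ on $\mu^{-1}(\xi)$, so $(s,t)=e$. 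Conversely, if $g\in G_{\xi}$ fixes some $m\in\mu^{-1}(\xi)\subset\lambda_M^{-1}(\lambda_{\text{big}}(\xi))$, then by Proposition~\ref{Proposition: Alternative action description} the element $\tau_{\xi}(g)\in\mathbb{T}_{\text{small}}\subset\mathbb{T}_{\text{big}}$ fixes $m$, so $\tau_{\xi}(g)=e$ and hence $g=e$.

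For part (ii), assume the equivalent freeness conditions of (i). Then $\xi$ and $\lambda_{\text{big}}(\xi)$ are regular values of $\mu$ and $\lambda_M$, so $\mu^{-1}(\xi)$ and $\lambda_M^{-1}(\lambda_{\text{big}}(\xi))$ are submanifolds of $M$ and both symplectic quotients are honest symplectic manifolds by Marsden--Weinstein. I would let $\Phi\colon M\sll{\xi}G\longrightarrow M_{\text{s-reg}}\sll{\lambda_{\text{big}}(\xi)}\mathbb{T}_{\text{big}}$ be the map induced on quotients by the inclusion $j\colon\mu^{-1}(\xi)\hookrightarrow\lambda_M^{-1}(\lambda_{\text{big}}(\xi))$; it is well defined because $G_{\xi}$-orbits in $\mu^{-1}(\xi)$ lie in $\mathbb{T}_{\text{big}}$-orbits by Proposition~\ref{Proposition: Alternative action description}. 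Bijectivity of $\Phi$ is exactly Proposition~\ref{Proposition: Two parts} together with Corollary~\ref{Corollary: Homeomorphism} --- surjectivity from the saturation statement (ii), injectivity from the argument of part (i) above. Smoothness of $\Phi$ is automatic since $j$ is a smooth embedding and the quotient map $\mu^{-1}(\xi)\to M\sll{\xi}G$ admits local smooth sections. To see $\Phi$ is a diffeomorphism I would note that the two quotients have equal dimension ($\dim M-\dim G-\ell$, using $\dim G_{\xi}=\ell$ and $\dim\mathbb{T}_{\text{big}}=\mathrm{b}$) and check that $d\Phi$ is injective; decomposing a vector tangent to a $\mathbb{T}_{\text{big}}$-orbit into its $\mathbb{T}_{\text{small}}$- and $\mathbb{T}_{\text{int}}$-parts, this reduces to transversality of $\mu^{-1}(\xi)$ to the $\mathbb{T}_{\text{int}}$-orbits, which again follows from freeness of $\mathbb{T}_{\text{int}}$ on $\g^*_{\text{s-reg}}$. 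Finally, $\Phi$ is a symplectomorphism by the usual functoriality argument: writing $\iota_1,\iota_2$ for the inclusions of the two level sets into $M$ (so $\iota_1=\iota_2\circ j$), $p_1,p_2$ for the quotient maps (so $\Phi\circ p_1=p_2\circ j$), and $\omega_1,\omega_2$ for the reduced forms (so $p_i^*\omega_i=\iota_i^*\omega$), one computes $p_1^*\Phi^*\omega_2=j^*p_2^*\omega_2=j^*\iota_2^*\omega=\iota_1^*\omega=p_1^*\omega_1$, and injectivity of $p_1^*$ gives $\Phi^*\omega_2=\omega_1$.

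I expect the only genuinely delicate step to be the passage from the homeomorphism of Corollary~\ref{Corollary: Homeomorphism} to a diffeomorphism of reduced manifolds, i.e. the verification that $d\Phi$ is an isomorphism; everything else is formal bookkeeping with the three propositions and the standard properties of symplectic reduction. An alternative route is to prove directly that $\delta_{\xi}$ is a diffeomorphism --- a smooth bijection of equidimensional manifolds whose differential is injective by the same transversality --- and then realize $\Phi^{-1}$ by descending $\delta_{\xi}^{-1}$ through the $\mathbb{T}_{\text{int}}$- and then $\mathbb{T}_{\text{small}}$-quotients.
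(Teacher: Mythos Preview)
Your proposal is correct and follows essentially the same route as the paper. The paper packages part~(i) via the group isomorphism $\rho_{\xi}:G_{\xi}\times\mathbb{T}_{\text{int}}\to\mathbb{T}_{\text{big}}$ and the $\rho_{\xi}$-equivariance of $\delta_{\xi}$, whereas you argue directly with elements; and for part~(ii) the paper first passes through the intermediate diffeomorphism $\mathrm{f}:\mu^{-1}(\xi)\to\lambda_M^{-1}(\lambda_{\text{big}}(\xi))/\mathbb{T}_{\text{int}}$ before quotienting by $\mathbb{T}_{\text{small}}$, while you go straight to $\Phi$ on the full quotients. The symplectomorphism check is identical, and your explicit treatment of why the induced map is a diffeomorphism (via the transversality of $\mu^{-1}(\xi)$ to the $\mathbb{T}_{\text{int}}$-orbits) is in fact more careful than the paper, which simply asserts that $\mathrm{f}$ is a diffeomorphism from Corollary~\ref{Corollary: Homeomorphism}.
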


\begin{proof}
We begin by verifying (i). In light of Proposition \ref{Proposition: Universal maximal torus}, the multiplication map
$$\rho_{\xi}:G_{\xi}\times\mathbb{T}_{\text{int}}\longrightarrow\mathbb{T}_{\text{big}},\quad (g,t)\mapsto\tau_{\xi}(g)t$$ is a Lie group isomorphism.
We also note that the action of $G_{\xi}$ on $\mu^{-1}(\xi)$ and multiplication action of $\mathbb{T}_{\text{int}}$ on itself define an action $G_{\xi}\times\mathbb{T}_{\text{int}}$ on $\mu^{-1}(\xi)\times\mathbb{T}_{\text{int}}$. By Proposition \ref{Proposition: Alternative action description}, the homeomorphism $\delta_{\xi}$ is equivariant in the following sense:
$$\delta_{\xi}((g,t)\cdot x)=\rho_{\xi}(g,t)\cdot\delta_{\xi}(x)$$ for all $(g,t)\in G_{\xi}\times\mathbb{T}_{\text{int}}$ and $x\in\mu^{-1}(\xi)\times\mathbb{T}_{\text{int}}$. It follows that $G_{\xi}$ acts freely on $\mu^{-1}(\xi)$ if and only if $\mathbb{T}_{\text{big}}$ acts freely on $\lambda_M^{-1}(\lambda_{\text{big}}(\xi))$.

We now prove (ii). By Corollary \ref{Corollary: Homeomorphism}, the inclusion $\mu^{-1}(\xi)\longhookrightarrow\lambda_M^{-1}(\lambda_{\text{big}}(\xi))$ descends to a diffeomorphism $$\mathrm{f}:\mu^{-1}(\xi)\overset{\cong}\longrightarrow\lambda_M^{-1}(\lambda_{\text{big}}(\xi))/\mathbb{T}_{\text{int}}.$$ We also note that the $\mathbb{T}_{\text{big}}$-action on $\lambda_M^{-1}(\lambda_{\text{big}}(\xi))$ induces a residual action of the subtorus $\mathbb{T}_{\text{small}}$ on $\lambda_M^{-1}(\lambda_{\text{big}}(\xi))/\mathbb{T}_{\text{int}}$. Proposition \ref{Proposition: Alternative action description} then tells us that $$\mathrm{f}(g\cdot m)=\tau_{\xi}(g)\cdot \mathrm{f}(m)$$ for all $g\in G_{\xi}$ and $m\in\mu^{-1}(\xi)$. The map $\mathrm{f}$ therefore descends to a diffeomorphism
$$\varphi:M\sll{\xi}G\overset{\cong}\longrightarrow M_{\text{s-reg}}\sll{\lambda_{\text{big}}(\xi)}\mathbb{T}_{\text{big}}.$$ It therefore suffices to prove that $\varphi$ pulls the symplectic form $\beta$ on $M_{\text{s-reg}}\sll{\lambda_{\text{big}}(\xi)}\mathbb{T}_{\text{big}}$ back to the symplectic form $\alpha$ on $M\sll{\xi}G$. 

We have a commutative diagram $$\begin{tikzcd}
\mu^{-1}(\xi)\arrow[r, "\mathrm{j}"] \arrow[d, "\pi"'] & \lambda_M^{-1}(\lambda_{\text{big}}(\xi)) \arrow[d, "\theta"] \\
M\sll{\xi}G \arrow[r, swap, "\varphi"] & M_{\text{s-reg}}\sll{\lambda_{\text{big}}(\xi)}\mathbb{T}_{\text{big}}
\end{tikzcd},$$
where $\pi:\mu^{-1}(\xi)\longrightarrow\mu^{-1}(\xi)/G_{\xi}=M\sll{\xi}G$ and $\theta:\lambda_M^{-1}(\lambda_{\text{big}}(\xi))\longrightarrow\lambda_M^{-1}(\lambda_{\text{big}}(\xi))/\mathbb{T}_{\text{big}}=M_{\text{s-reg}}\sll{\lambda_{\text{big}}(\xi)}\mathbb{T}_{\text{big}}$ are the canonical quotient maps and $\mathrm{j}:\mu^{-1}(\xi)\longhookrightarrow\lambda_M^{-1}(\lambda_{\text{big}}(\xi))$ is the inclusion. We also have inclusion maps $\mathrm{k}:\mu^{-1}(\xi)\longhookrightarrow M$ and $\mathrm{l}:\lambda_M^{-1}(\lambda_{\text{big}}(\xi))\longhookrightarrow M$. Another consideration is that $\alpha$ (resp. $\beta$) is the unique $2$-form on $M\sll{\xi}G$ (resp. $M_{\text{s-reg}}\sll{\lambda_{\text{big}}(\xi)}\mathbb{T}_{\text{big}}$) for which $\pi^*\alpha=\mathrm{k}^*\omega$ (resp. $\theta^*\beta=\mathrm{l}^*\omega$), where $\omega$ is the symplectic form on $M$. It therefore suffices to prove that $\pi^*(\varphi^*\beta)=\mathrm{k}^*\omega$. On the other hand, our commutative diagram implies that
$$\pi^*(\varphi^*\beta)=\mathrm{j}^*(\theta^*\beta)=\mathrm{j}^*(\mathrm{l}^*\omega)=\mathrm{k}^*\omega.$$
This completes the proof.
\end{proof}

\section{Generalization to stratified symplectic spaces}\label{Section: Generalization to stratified symplectic spaces}
We now provide a generalization of Theorem \ref{Theorem: Free case} in the realm of stratified symplectic spaces \cite{SjamaarLerman}. In \ref{Subsection: Stratified symplectic}, we recall the immediately pertinent parts of Sjamaar and Lerman's more general theory of stratified symplectic spaces. The generalization of Theorem \ref{Theorem: Free case} to stratified symplectic spaces appears in \ref{Subsection: More general}. 

\subsection{Stratified symplectic spaces}\label{Subsection: Stratified symplectic}
Let $X$ be a topological space on which a compact torus $T$ acts continuously. Given a closed subgroup $H\subset T$, let $$X_H\coloneqq\{x\in X:T_x=H\}$$ be the locus of points with $T$-stabilizer $T_x$ equal to $H$. Denote by $\mathrm{Stab}(T,X)$ the set of all closed subgroups $H\subset T$ for which $X_H\neq\emptyset$.

Now let $G$ be a compact connected Lie group with Lie algebra $\g$. Suppose that $M$ is a Hamiltonian $G$-space with moment map $\mu:M\longrightarrow\g^*$. As discussed in the introduction to this paper, $M\sll{\xi}G$ is a stratified symplectic space \cite{SjamaarLerman} for all $\xi\in\g^*$. This means that $M\sll{\xi}G$ is naturally partitioned into symplectic manifolds satisfying certain compatibility conditions. While we refer the reader to \cite[Definition 1.12]{SjamaarLerman} for a precise definition and description of stratified symplectic spaces, the following exposition will be sufficient for our purposes. 

Fix a point $\xi\in\mathfrak{g}^*_{\text{reg}}$, and recall that $G_{\xi}\subset G$ is a maximal torus. Adopt the more parsimonious notation $$\mathrm{Stab}(G,\xi)\coloneqq\mathrm{Stab}(G_{\xi},\mu^{-1}(\xi)),$$ and note that $\mu^{-1}(\xi)$ is the disjoint union
$$\mu^{-1}(\xi)=\bigsqcup_{H\in\mathrm{Stab}(G,\xi)}\mu^{-1}(\xi)_H.$$
The arguments in the proof of \cite[Theorem 2.1]{SjamaarLerman} imply that each subset $\mu^{-1}(\xi)_H$ is a locally closed, $G_{\xi}$-invariant submanifold of $M$. These arguments also imply that the topological quotient $(\mu^{-1}(\xi)_H)/G_{\xi}$ carries a unique manifold structure for which the canonical map $\pi:\mu^{-1}(\xi)_H\longrightarrow(\mu^{-1}(\xi)_H)/G_{\xi}$ is a surjective submersion. One further consequence of \cite[Theorem 2.1]{SjamaarLerman} is the existence of a symplectic form $\overline{\omega}$ on $(\mu^{-1}(\xi)_H)/G_{\xi}$ such that $\pi^*\overline{\omega}$ is the pullback of $\omega$ along the inclusion $\mu^{-1}(\xi)_H\longhookrightarrow M$. It follows that $M\sll{\xi}G=\mu^{-1}(\xi)/G_{\xi}$ is a disjoint union
\begin{equation}\label{Equation: Symplectic strata}M\sll{\xi}G=\bigsqcup_{H\in\mathrm{Stab}(G,\xi)}(\mu^{-1}(\xi)_H)/G_{\xi}\end{equation} of symplectic manifolds, called the \textit{symplectic strata} of $M\sll{\xi}G$.

\begin{remark}\label{Remark: Strata}
The quotients $(\mu^{-1}(\xi)_H)/G_{\xi}$ need not be manifolds in the traditional sense of the term; each may have connected components of different dimensions. To obtain a stratification into genuine symplectic manifolds, one must refine \eqref{Equation: Symplectic strata} and declare the symplectic strata to be the connected components of the quotients $(\mu^{-1}(\xi)_H)/G_{\xi}$. The distinction between \eqref{Equation: Symplectic strata} and this refined stratification will not materially affect any argument in this paper.
\end{remark}

\begin{definition}\label{Definition: Isomorphism}
Let $G$ and $K$ be compact connected Lie groups with respective Lie algebras $\g$ and $\mathfrak{k}$. Suppose that $M$ (resp. $N$) is a Hamiltonian $G$-space (resp. Hamiltonian $K$-space) with moment map $\mu:M\longrightarrow\g^*$ (resp. $\nu:N\longrightarrow\mathfrak{k}^*$). Take $\xi\in\g^*_{\text{reg}}$ and $\eta\in\mathfrak{k}^*_{\text{reg}}$. A pair of maps $\varphi:M\sll{\xi}G\longrightarrow N\sll{\eta}K$ and $\phi:\mathrm{Stab}(G,\xi)\longrightarrow\mathrm{Stab}(K,\eta)$ will be called an \textit{isomorphism of stratified symplectic spaces} if the following conditions are satisfied:
\begin{itemize}
\item[\textup{(i)}] $\varphi$ is a homeomorphism;
\item[\textup{(ii)}] $\phi$ is a bijection;
\item[\textup{(iii)}] $\varphi$ restricts to a symplectomorphism $(\mu^{-1}(\xi)_H)/G_{\xi}\longrightarrow(\nu^{-1}(\eta)_{\phi(H)})/K_{\eta}$ for each $H\in\mathrm{Stab}(G,\xi)$. 
\end{itemize}
\end{definition}

\begin{remark}
Assume that this definition is satisfied. Equip $M\sll{\xi}G$ and $N\sll{\eta}K$ with the refined stratifications discussed in Remark \ref{Remark: Strata}. By (ii) and (iii), the association $S\mapsto\varphi(S)$ defines a bijection from the set of symplectic strata $S\subset M\sll{\xi}G$ to the set of symplectic strata in $N\sll{\eta}K$. Property (i) implies that this bijection is an isomorphism of partially ordered sets, i.e. any symplectic strata $S,T\subset M\sll{\xi}G$ satisfying $S\subset\overline{T}$ must also satisfy $\varphi(S)\subset\overline{\varphi(T)}$. We also know that $\varphi$ restricts to a symplectomorphism $S\longrightarrow\varphi(S)$ for all symplectic strata $S\subset M\sll{\xi}G$, as follows from (iii). In other words, an isomorphism in the sense of Definition \ref{Definition: Isomorphism} gives rise to an isomorphism between the refined symplectic stratifications on $M\sll{\xi}G$ and $N\sll{\eta}K$.  
\end{remark}

\subsection{A more general abelianization theorem}\label{Subsection: More general} 
Let us continue with the notation and conventions set in Section \ref{Section: The abelianization theorem}, as well as those in Section \ref{Subsection: Stratified symplectic} concerning stratified symplectic spaces. 

In preparation for our next proposition, we encourage the reader to recall Proposition \ref{Proposition: Universal maximal torus} and Corollary \ref{Corollary: Homeomorphism}.

\begin{proposition}\label{Proposition: Strata}
Let $M$ be a Hamiltonian $G$-space with moment map $\mu:M\longrightarrow\g^*$. Suppose that $\xi\in\g^*_{\emph{s-reg}}$.
\begin{itemize}
\item[\textup{(i)}] The association $H\mapsto\tau_{\xi}(H)$ defines a bijection $\mathrm{Stab}(G,\xi)\overset{\cong}\longrightarrow\mathrm{Stab}(\mathbb{T}_{\emph{big}},\lambda_{\emph{big}}(\xi))$.
\item[\textup{(ii)}] If $H\subset G_{\xi}$ is a closed subgroup, then $\delta_{\xi}$ restricts to a diffeomorphism
$$\mu^{-1}(\xi)_H\times\mathbb{T}_{\emph{int}}\overset{\cong}\longrightarrow\lambda_M^{-1}(\lambda_{\emph{big}}(\xi))_{\tau_{\xi}(H)}.$$
\end{itemize}
\end{proposition}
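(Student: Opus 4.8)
The plan is to deduce both parts from the equivariance of the homeomorphism $\delta_{\xi}:\mu^{-1}(\xi)\times\mathbb{T}_{\text{int}}\longrightarrow\lambda_M^{-1}(\lambda_{\text{big}}(\xi))$ under the action of $G_{\xi}\times\mathbb{T}_{\text{int}}$ on the source (via the $G_{\xi}$-action on $\mu^{-1}(\xi)$ and left multiplication on $\mathbb{T}_{\text{int}}$) and of $\mathbb{T}_{\text{big}}$ on the target, transported along the Lie group isomorphism $\rho_{\xi}:G_{\xi}\times\mathbb{T}_{\text{int}}\overset{\cong}\longrightarrow\mathbb{T}_{\text{big}}$, $(g,t)\mapsto\tau_{\xi}(g)t$ from the proof of Theorem \ref{Theorem: Free case}. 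First I would record the key stabilizer computation: for a point $x=(m,t)\in\mu^{-1}(\xi)\times\mathbb{T}_{\text{int}}$, the stabilizer of $x$ under $G_{\xi}\times\mathbb{T}_{\text{int}}$ is $(G_{\xi})_m\times\{e\}$, since $\mathbb{T}_{\text{int}}$ acts freely on itself by translation; and by equivariance of $\delta_{\xi}$ the stabilizer in $\mathbb{T}_{\text{big}}$ of $\delta_{\xi}(x)=t\cdot m$ is therefore $\rho_{\xi}((G_{\xi})_m\times\{e\})=\tau_{\xi}((G_{\xi})_m)$. Because $\delta_{\xi}$ is a bijection, every point of $\lambda_M^{-1}(\lambda_{\text{big}}(\xi))$ is of this form, so the set of stabilizer subgroups that actually occur in $\lambda_M^{-1}(\lambda_{\text{big}}(\xi))$ is exactly $\{\tau_{\xi}(H):H\in\mathrm{Stab}(G,\xi)\}$, and $\tau_{\xi}$ being an isomorphism makes $H\mapsto\tau_{\xi}(H)$ a well-defined injection; surjectivity onto $\mathrm{Stab}(\mathbb{T}_{\text{big}},\lambda_{\text{big}}(\xi))$ follows from the same surjectivity of $\delta_{\xi}$. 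This gives (i).

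For (ii), fix a closed subgroup $H\subset G_{\xi}$. The stabilizer computation above shows that $\delta_{\xi}$ carries the locus $\{x:(G_{\xi}\times\mathbb{T}_{\text{int}})_x=H\times\{e\}\}$ bijectively onto the locus $\{y:(\mathbb{T}_{\text{big}})_y=\tau_{\xi}(H)\}=\lambda_M^{-1}(\lambda_{\text{big}}(\xi))_{\tau_{\xi}(H)}$. It remains to identify the source locus with $\mu^{-1}(\xi)_H\times\mathbb{T}_{\text{int}}$: a point $(m,t)$ has $(G_{\xi}\times\mathbb{T}_{\text{int}})$-stabilizer equal to $H\times\{e\}$ precisely when $(G_{\xi})_m=H$, i.e. when $m\in\mu^{-1}(\xi)_H$ and $t$ arbitrary, which is exactly $\mu^{-1}(\xi)_H\times\mathbb{T}_{\text{int}}$. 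So $\delta_{\xi}$ restricts to a bijection between these two sets; since $\delta_{\xi}$ is the restriction of the smooth action map $M_{\text{s-reg}}\times\mathbb{T}_{\text{int}}\longrightarrow M_{\text{s-reg}}$, $(m,t)\mapsto t\cdot m$, it is smooth, and I would argue its restriction is a diffeomorphism onto the submanifold $\lambda_M^{-1}(\lambda_{\text{big}}(\xi))_{\tau_{\xi}(H)}$ either by invoking that $\delta_{\xi}$ is a homeomorphism that intertwines the two locally closed submanifold structures (both cut out as stabilizer loci, hence locally closed submanifolds by the arguments in the proof of \cite[Theorem 2.1]{SjamaarLerman}), or more directly by exhibiting a smooth inverse: on $\lambda_M^{-1}(\lambda_{\text{big}}(\xi))_{\tau_{\xi}(H)}$, Definition \ref{Definition: Main definition}(v) and the equivariance of $\mu|_{M_{\text{s-reg}}}$ give a unique $t\in\mathbb{T}_{\text{int}}$ with $t\cdot\mu(y)=\xi$, and this $t$ depends smoothly on $y$ since the principal $\mathbb{T}_{\text{int}}$-bundle $\lambda_{\text{big}}|_{\g^*_{\text{s-reg}}}$ is locally trivial, so $y\mapsto(t^{-1}\cdot y, t^{-1})$ is the required smooth inverse.

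The main obstacle I anticipate is the smoothness (as opposed to merely continuity) of the restricted map and its inverse: Corollary \ref{Corollary: Homeomorphism} only gives a homeomorphism, and one must be careful that the submanifold structures on $\mu^{-1}(\xi)_H$ and $\lambda_M^{-1}(\lambda_{\text{big}}(\xi))_{\tau_{\xi}(H)}$ — which come from the Sjamaar--Lerman stabilizer-stratification machinery rather than from anything we have built by hand — are compatible with $\delta_{\xi}$. The cleanest route is the explicit smooth inverse sketched above, using local triviality of the principal $\mathbb{T}_{\text{int}}$-bundle from Definition \ref{Definition: Main definition}(v); one then only needs that $\mu^{-1}(\xi)_H\times\mathbb{T}_{\text{int}}\longrightarrow M_{\text{s-reg}}$ is an immersion with image exactly the locally closed submanifold $\lambda_M^{-1}(\lambda_{\text{big}}(\xi))_{\tau_{\xi}(H)}$, which follows from the bijectivity already established together with equality of dimensions and the fact that $\delta_{\xi}$ is a restriction of the (submersive, free-on-the-$\mathbb{T}_{\text{int}}$-slices) action map.
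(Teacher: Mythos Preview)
Your proof is correct and follows essentially the same route as the paper: both deduce (i) and (ii) from the equivariance identity $\delta_{\xi}((g,t)\cdot x)=\rho_{\xi}(g,t)\cdot\delta_{\xi}(x)$ together with the observation that the $(G_{\xi}\times\mathbb{T}_{\text{int}})$-stabilizer of $(m,t)$ is $(G_{\xi})_m\times\{e\}$. You are in fact more careful than the paper on the diffeomorphism claim in (ii)---the paper's own proof only records ``homeomorphism'' at that step---so your explicit smooth inverse via the principal $\mathbb{T}_{\text{int}}$-bundle from Definition~\ref{Definition: Main definition}(v) is a welcome addition rather than a deviation.
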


\begin{proof}
As in the proof of Theorem \ref{Theorem: Free case}(i), we have
$$\delta_{\xi}((g,t)\cdot x)=\rho_{\xi}(g,t)\cdot\delta_{\xi}(x)$$ for all $(g,t)\in G_{\xi}\times\mathbb{T}_{\text{int}}$ and $x\in\mu^{-1}(\xi)\times\mathbb{T}_{\text{int}}$. It follows that $K\mapsto\rho_{\xi}(K)$ defines a bijection $$\mathrm{Stab}(G_{\xi}\times\mathbb{T}_{\text{int}},\mu^{-1}(\xi)\times\mathbb{T}_{\text{int}})\overset{\cong}\longrightarrow\mathrm{Stab}(\mathbb{T}_{\text{big}},\lambda_{\text{big}}(\xi)),$$ and that $\delta_{\xi}$ restricts to a homeomorphism $$(\mu^{-1}(\xi)\times\mathbb{T}_{\text{int}})_K\overset{\cong}\longrightarrow\lambda_M^{-1}(\lambda_{\text{big}}(\xi))_{\rho_{\xi}(K)}$$ for all closed subgroups $K\subset G_{\xi}\times\mathbb{T}_{\text{int}}$. On the other hand, we clearly have a bijection
$$\mathrm{Stab}(G,\xi)\overset{\cong}\longrightarrow\mathrm{Stab}(G_{\xi}\times\mathbb{T}_{\text{int}},\mu^{-1}(\xi)\times\mathbb{T}_{\text{int}}),\quad H\mapsto H\times\{e\}\subset G_{\xi}\times\mathbb{T}_{\text{int}}.$$ We also note that $(\mu^{-1}(\xi)\times\mathbb{T}_{\text{int}})_K = \mu^{-1}(\xi)_H\times\mathbb{T}_{\text{int}}$ and $\rho_{\xi}(K)=\tau_{\xi}(H)$ for $K=H\times\{e\}$. These last three sentences combine to imply the desired results. 
\end{proof}

The following is our generalization of Theorem \ref{Theorem: Free case} to stratified symplectic spaces.

\begin{theorem}
If $\xi\in\g^*_{\emph{s-reg}}$, then there is a canonical isomorphism $M\sll{\xi} G\cong M_{\emph{s-reg}}\sll{\lambda_{\text{big}}(\xi)}\mathbb{T}_{\emph{big}}$ of stratified symplectic spaces.
\end{theorem}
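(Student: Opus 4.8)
The plan is to assemble the isomorphism of stratified symplectic spaces from the pieces already in hand, namely Proposition \ref{Proposition: Strata} for the stratum-by-stratum data and the proof technique of Theorem \ref{Theorem: Free case}(ii) for the symplectic compatibility on each stratum. First I would produce the candidate pair of maps: take $\varphi:M\sll{\xi}G\longrightarrow M_{\text{s-reg}}\sll{\lambda_{\text{big}}(\xi)}\mathbb{T}_{\text{big}}$ to be the homeomorphism induced by the inclusion $\mu^{-1}(\xi)\hookrightarrow\lambda_M^{-1}(\lambda_{\text{big}}(\xi))$ (this is exactly the map $\varphi$ constructed in the proof of Theorem \ref{Theorem: Free case}(ii) via Corollary \ref{Corollary: Homeomorphism}; crucially, that construction never used the freeness hypothesis to obtain the underlying homeomorphism), and take $\phi\coloneqq(H\mapsto\tau_{\xi}(H)):\mathrm{Stab}(G,\xi)\longrightarrow\mathrm{Stab}(\mathbb{T}_{\text{big}},\lambda_{\text{big}}(\xi))$. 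Condition (i) of Definition \ref{Definition: Isomorphism} is then immediate, and condition (ii) is exactly Proposition \ref{Proposition: Strata}(i).

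The substance is condition (iii): for each $H\in\mathrm{Stab}(G,\xi)$, the map $\varphi$ should restrict to a symplectomorphism $(\mu^{-1}(\xi)_H)/G_{\xi}\longrightarrow(\lambda_M^{-1}(\lambda_{\text{big}}(\xi))_{\tau_{\xi}(H)})/\mathbb{T}_{\text{big}}$. By Proposition \ref{Proposition: Strata}(ii), $\delta_{\xi}$ restricts to a diffeomorphism $\mu^{-1}(\xi)_H\times\mathbb{T}_{\text{int}}\overset{\cong}\longrightarrow\lambda_M^{-1}(\lambda_{\text{big}}(\xi))_{\tau_{\xi}(H)}$, and quotienting by $\mathbb{T}_{\text{int}}$ gives a diffeomorphism $\mathrm{f}_H:\mu^{-1}(\xi)_H\overset{\cong}\longrightarrow\lambda_M^{-1}(\lambda_{\text{big}}(\xi))_{\tau_{\xi}(H)}/\mathbb{T}_{\text{int}}$; by the equivariance of $\delta_{\xi}$ under $\rho_{\xi}$ together with Proposition \ref{Proposition: Alternative action description}, $\mathrm{f}_H$ intertwines the $G_{\xi}$-action with the residual $\mathbb{T}_{\text{small}}$-action, so it descends to the restriction of $\varphi$ to the $H$-stratum. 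To check that this restriction pulls back symplectic forms correctly, I would run exactly the commutative-diagram argument from the proof of Theorem \ref{Theorem: Free case}(ii), but now with $\mu^{-1}(\xi)$ replaced by $\mu^{-1}(\xi)_H$ and $\lambda_M^{-1}(\lambda_{\text{big}}(\xi))$ replaced by $\lambda_M^{-1}(\lambda_{\text{big}}(\xi))_{\tau_{\xi}(H)}$: the inclusion $\mu^{-1}(\xi)_H\hookrightarrow\lambda_M^{-1}(\lambda_{\text{big}}(\xi))_{\tau_{\xi}(H)}$ sits in a square with the quotient maps $\pi:\mu^{-1}(\xi)_H\to(\mu^{-1}(\xi)_H)/G_{\xi}$ and $\theta:\lambda_M^{-1}(\lambda_{\text{big}}(\xi))_{\tau_{\xi}(H)}\to(\cdots)/\mathbb{T}_{\text{big}}$, and one uses that the stratum symplectic form $\overline{\omega}$ is uniquely characterized by the property that its pullback along the quotient map equals the pullback of $\omega$ along the inclusion into $M$ (this is the Sjamaar--Lerman fact recalled in Section \ref{Subsection: Stratified symplectic}). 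The identity $\pi^*(\varphi^*\overline{\omega}_{\mathbb{T}})=\mathrm{j}^*\mathrm{l}^*\omega=\mathrm{k}^*\omega=\pi^*\overline{\omega}_G$ then forces $\varphi^*\overline{\omega}_{\mathbb{T}}=\overline{\omega}_G$ on the stratum, since $\pi$ is a surjective submersion.

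The main obstacle, such as it is, is bookkeeping rather than a genuine difficulty: one must be careful that the manifold structures on the strata $\mu^{-1}(\xi)_H$ and $\lambda_M^{-1}(\lambda_{\text{big}}(\xi))_{\tau_{\xi}(H)}$ — which are the Sjamaar--Lerman structures, characterized via the quotient maps being submersions — are compatible with the smooth structures coming from Proposition \ref{Proposition: Strata}(ii), so that $\varphi$ restricted to each stratum is genuinely smooth and its inverse is smooth. This is handled by observing that $\delta_{\xi}$ and hence $\mathrm{f}_H$ are diffeomorphisms onto locally closed submanifolds of $M_{\text{s-reg}}$ fitting into the relevant submersion squares, so the uniqueness clauses in the Sjamaar--Lerman description pin down all structures involved. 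A small point worth flagging explicitly is that $\mathbb{T}_{\text{int}}$ acts freely on $M_{\text{s-reg}}$ along $\lambda_M^{-1}(\lambda_{\text{big}}(\xi))$ by Proposition \ref{Proposition: Two parts}(i) (or rather its proof, via freeness on $\g^*_{\text{s-reg}}$), which is what makes the quotient by $\mathbb{T}_{\text{int}}$ in the definition of $\mathrm{f}_H$ a principal bundle and hence the descent unambiguous. With these points in place, the three conditions of Definition \ref{Definition: Isomorphism} are verified and the theorem follows; I would also remark, as the paper does elsewhere, that passing to the refined stratification by connected components (Remark \ref{Remark: Strata}) changes nothing.
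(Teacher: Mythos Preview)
Your proposal is correct and follows essentially the same route as the paper's own proof: construct $\varphi$ from the inclusion $\mu^{-1}(\xi)\hookrightarrow\lambda_M^{-1}(\lambda_{\text{big}}(\xi))$ via Corollary \ref{Corollary: Homeomorphism}, take $\phi=(H\mapsto\tau_{\xi}(H))$ from Proposition \ref{Proposition: Strata}(i), use Proposition \ref{Proposition: Strata}(ii) together with the equivariance from Proposition \ref{Proposition: Alternative action description} to get stratum-wise diffeomorphisms, and then verify the symplectic compatibility on each stratum by the same commutative-diagram pullback argument as in Theorem \ref{Theorem: Free case}(ii). Your extra remarks about compatibility of manifold structures and the freeness of the $\mathbb{T}_{\text{int}}$-action are sound elaborations of points the paper leaves implicit.
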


\begin{proof}
By Corollary \ref{Corollary: Homeomorphism} and Proposition \ref{Proposition: Strata}, the inclusion $\mu^{-1}(\xi)\longhookrightarrow\lambda_M^{-1}(\lambda_{\text{big}}(\xi))$ descends to a homeomorphism $$\mathrm{f}:\mu^{-1}(\xi)\overset{\cong}\longrightarrow\lambda_M^{-1}(\lambda_{\text{big}}(\xi))/\mathbb{T}_{\text{int}}$$ whose restriction to $\mu^{-1}(\xi)_H$ is a diffeomorphism $$\mu^{-1}(\xi)_H\overset{\cong}\longrightarrow\lambda_M^{-1}(\lambda_{\text{big}}(\xi))_{\tau_{\xi}(H)}/\mathbb{T}_{\text{int}}$$ for all $H\in\mathrm{Stab}(G,\xi)$. We also note that the $\mathbb{T}_{\text{big}}$-action on $\lambda_M^{-1}(\lambda_{\text{big}}(\xi))$ induces a residual action of the subtorus $\mathbb{T}_{\text{small}}$ on $\lambda_M^{-1}(\lambda_{\text{big}}(\xi))/\mathbb{T}_{\text{int}}$. Proposition \ref{Proposition: Alternative action description} then tells us that $$\mathrm{f}(g\cdot m)=\tau_{\xi}(g)\cdot \mathrm{f}(m)$$ for all $g\in G_{\xi}$ and $m\in\mu^{-1}(\xi)$. The map $\mathrm{f}$ therefore descends to a homeomorphism
$$\varphi:M\sll{\xi}G\overset{\cong}\longrightarrow M_{\text{s-reg}}\sll{\lambda_{\text{big}}(\xi)}\mathbb{T}_{\text{big}}$$
whose restriction to $(\mu^{-1}(\xi)_H)/G_{\xi}$ is a diffeomorphism $$\varphi_H:(\mu^{-1}(\xi)_H)/G_{\xi}\overset{\cong}\longrightarrow(\lambda_M^{-1}(\lambda_{\text{big}}(\xi))_{\tau_{\xi}(H)})/\mathbb{T}_{\text{big}}$$ for all $H\in\mathrm{Stab}(G,\xi)$.

Now consider the bijection 
$$\phi:\mathrm{Stab}(G,\xi)\overset{\cong}\longrightarrow\mathrm{Stab}(\mathbb{T}_{\text{big}},\lambda_{\text{big}}(\xi)),\quad H\mapsto \tau_{\xi}(H)$$ from Proposition \ref{Proposition: Strata}(i). We claim that $\varphi$ and $\phi$ define an isomorphism of stratified symplectic spaces, in the sense of Definition \ref{Definition: Isomorphism}. In light of the previous paragraph, it suffices to prove the following for all $H\in\mathrm{Stab}(G,\xi)$: $\varphi_H$ pulls the symplectic form $\beta$ on $(\lambda_M^{-1}(\lambda_{\text{big}}(\xi))_{\tau_{\xi}(H)})/\mathbb{T}_{\text{big}}$ back to the symplectic form $\alpha$ on $(\mu^{-1}(\xi)_H)/G_{\xi}$. 

Proposition \ref{Proposition: Strata}(ii) implies that $\mu^{-1}(\xi)_H\subset\lambda_M^{-1}(\lambda_{\text{big}}(\xi))$. This leads to the commutative diagram $$\begin{tikzcd}
\mu^{-1}(\xi)_H\arrow[r, "\mathrm{j}"] \arrow[d, "\pi"'] & \lambda_M^{-1}(\lambda_{\text{big}}(\xi))_{\tau_{\xi}(H)} \arrow[d, "\theta"] \\
(\mu^{-1}(\xi)_H)/G_{\xi} \arrow[r, swap, "\varphi_H"] & (\lambda_M^{-1}(\lambda_{\text{big}}(\xi))_{\tau_{\xi}(H)})/\mathbb{T}_{\text{big}}
\end{tikzcd},$$
where $\pi:\mu^{-1}(\xi)_H\longrightarrow(\mu^{-1}(\xi)_H)/G_{\xi}$ and $\theta:\lambda_M^{-1}(\lambda_{\text{big}}(\xi))_{\tau_{\xi}(H)}\longrightarrow (\lambda_M^{-1}(\lambda_{\text{big}}(\xi))_{\tau_{\xi}(H)})/\mathbb{T}_{\text{big}}$ are the canonical quotient maps and $\mathrm{j}:\mu^{-1}(\xi)_H\longhookrightarrow\lambda_M^{-1}(\lambda_{\text{big}}(\xi))$ is the inclusion. We also have inclusion maps $\mathrm{k}:\mu^{-1}(\xi)\longhookrightarrow M$ and $\mathrm{l}:\lambda_M^{-1}(\lambda_{\text{big}}(\xi))\longhookrightarrow M$. Another consideration is that $\alpha$ (resp. $\beta$) is the unique $2$-form on $(\mu^{-1}(\xi)_H)/G_{\xi}$ (resp. $(\lambda_M^{-1}(\lambda_{\text{big}}(\xi))_{\tau_{\xi}(H)})/\mathbb{T}_{\text{big}}$) for which $\pi^*\alpha=\mathrm{k}^*\omega$ (resp. $\theta^*\beta=\mathrm{l}^*\omega$). It therefore suffices to prove that $\pi^*(\varphi_H^*\beta)=\mathrm{k}^*\omega$. On the other hand, our commutative diagram implies that
$$\pi^*(\varphi_H^*\beta)=\mathrm{j}^*(\theta^*\beta)=\mathrm{j}^*(\mathrm{l}^*\omega)=\mathrm{k}^*\omega.$$
This completes the proof.
\end{proof}

\bibliographystyle{acm} 
\bibliography{Abelianization}

\end{document}